\newtheorem{definition}{Definition}
\newtheorem{example}{Example}
\newtheorem{remark}{Remark}
\newtheorem{proposition}{Proposition}
\newtheorem{assumption}{Assumption}
\crefname{algocf}{Algorithm}{Algorithms}
\Crefname{algocf}{Algorithm}{Algorithms}
\crefname{appendix}{Appendix}{Appendices}
\newcommand{\affected}{affected}
\newcommand{\alloptimal}{{both-solved}\xspace}
\newcommand{\difftimeouts}{{diff-timeouts}\xspace}
\newcommand{\bracket}[2]{[#1,#2]}
\newcommand{\rhs}{b}
\newcommand{\lb}[1]{\ell_{#1}}
\newcommand{\ub}[1]{u_{#1}}
\newcommand{\loclb}[1]{{\ell_{#1}^{\rho}}}
\newcommand{\locub}[1]{u_{#1}^{\rho}}
\newcommand{\slackreducing}{\textnormal{nr}}
\newcommand{\reasoncon}{C_{\textnormal{reason}}}
\newcommand{\reasoncMIR}{C_{\textnormal{cMIR}}}
\newcommand{\reasonwMIR}{C_{\textnormal{wMIR}}}
\newcommand{\contreasoncon}{C_{\textnormal{cont}}}
\newcommand{\conflictcon}{C_{\textnormal{confl}}}
\newcommand{\learnedcon}{C_{\textnormal{learn}}}
\newcommand{\state}[1]{\rho_{#1}}
\newcommand{\maxact}[2]{\textnormal{act}^{\text{max}}_{#1}({#2})}
\newcommand{\minact}[2]{\textnormal{act}^{\text{min}}_{#1}({#2})}
\newcommand{\NN}{\mathcal{N}}
\newcommand{\R}{\mathbb{R}}
\newcommand{\Rpos}{\mathbb{R}_{\ge 0}}
\newcommand{\Z}{\mathbb{Z}}
\newcommand{\Zpos}{\mathbb{Z}_{\ge 0}}
\newcommand{\I}{\mathcal{I}}
\newcommand{\cidx}{i} 
\newcommand{\vidx}{j} 
\newcommand{\nidx}{p} 
\newcommand{\bidx}{q} 
\newcommand{\ridx}{r} 
\newcommand{\var}[1]{x_{#1}}
\newcommand{\nvar}[1]{\overline{x}_{#1}}
\newcommand{\varr}{x_{r}}
\newcommand{\ie}{i.e.,\xspace}
\newcommand{\eg}{e.g.,\xspace}
\newcommand{\solver}[1]{\textsc{#1}\xspace}
\newcommand{\scip}{\solver{SCIP}}
\newcommand{\miplib}{\textsc{MIPLIB}\xspace}
\newcommand{\floor}[1]{\lfloor#1\rfloor}
\newcommand{\ceil}[1]{\lceil#1\rceil}
\newcommand\vceil[1]{\left\lceil#1\right\rceil}
\definecolor{tabcolor}{HTML}{6666AA}
\definecolor{f1}{HTML}{000060}
\definecolor{f2}{HTML}{0000FF}
\definecolor{f3}{HTML}{36648B}
\definecolor{f4}{HTML}{4682B4}
\definecolor{f5}{HTML}{5CACEE}
\definecolor{f6}{HTML}{00FFFF}
\definecolor{f7}{HTML}{00DD99}
\definecolor{f8}{HTML}{008888}
\definecolor{f9}{HTML}{000000}
\newcommand{\bigsum}[1]{\sum\limits_{#1}}
\definecolor{mygreen}{rgb}{0,0.6,0}
\newcommand{\bluediff}[1]{\textcolor{black}{#1}}
\title{Cut-based Conﬂict Analysis in Mixed Integer Programming}
\author{
 Gioni Mexi \\
  Zuse Institute Berlin, Germany\\
  \texttt{mexi@zib.de} \\
   \And
  Felipe Serrano \\
  COPT GmbH, Berlin, Germany\\
  \And
 Timo Berthold \\
  Fair Isaac Deutschland GmbH\\
  TU Berlin, Germany\\
  \And
 Ambros Gleixner \\
  HTW Berlin, Germany\\
  Zuse Institute Berlin, Germany \\
  \And
 Jakob Nordström \\
  University of Copenhagen, Denmark\\
  Lund University, Sweden\\
}
\begin{document}
\maketitle
\begin{abstract}
For almost two decades, mixed integer programming (MIP) solvers have
used graph-based conflict analysis to learn from local
infeasibilities during
branch-and-bound search.
In this paper, we improve MIP conflict analysis by instead using
reasoning based on cuts, inspired by the development of
conflict-driven solvers for pseudo-Boolean optimization. 
Phrased in MIP terminology, this type of conflict analysis can be
understood as a sequence of linear combinations, integer roundings,
and cut generation.
We leverage this MIP perspective to design
a new conflict analysis algorithm based on 
mixed integer rounding cuts, which theoretically dominates the state-of-the-art
method in pseudo-Boolean optimization
using
Chv\'atal-Gomory cuts.
Furthermore, we extend this
cut-based conflict analysis
from pure binary programs 
to mixed binary programs and---in limited form---to general
MIP with also integer-valued variables.
We perform an empirical evaluation of cut-based conflict analysis as
implemented in the open-source MIP solver \solver{SCIP},
testing it on a large and diverse set of MIP instances from \miplib2017.
Our experimental results indicate that the new algorithm improves the
default performance of \scip in terms of running time, number of nodes
in the search tree, and the number of instances solved.
 
\end{abstract}

\section{Introduction}
\label{sec:introduction}

The use of conflict analysis has a decades-old history in fields
like computer-aided verification
\citep{stallman1977forward}
and Boolean satisfiability (SAT) solving
\citep{BS97UsingCSP,MS99Grasp,MMZZM01Engineering}.  
Different sets of researchers  have independently proposed different
ways of incorporating such conflict analysis techniques into mixed
integer programming (MIP)
\citep{%
  Achterberg07ConflictAnalysis,%
  davey2002efficient,%
  sandholm2006nogood}.
We refer the reader to
\citep{witzig2021computational}
for a recent overview of techniques.

When a SAT or MIP solver encounters an infeasible subproblem during
search, the conflict analysis algorithm can be viewed in terms of a
\emph{conflict graph}, a directed acyclic graph that captures the
sequence of (branching) decisions and (propagated) deductions that led
to the infeasibility.
Using this graph, a valid constraint can be inferred by
identifying a subset of bound changes that separate the decisions
from the node where contradiction was reached.
Such learned constraints are disjunctive constraints (referred to as
clauses or clausal constraints in SAT) that are implied by the
original problem and are hence globally valid.
SAT conflict analysis can also be described in terms of
using the 
\emph{resolution} proof system
\citep{%
  Blake37Thesis,%
  DP60ComputingProcedure,%
  DLL62MachineProgram,%
  Robinson65Machine-oriented%
}
to produce syntactic derivations of learned constraints from the
input formula~\citep{BKS04TowardsUnderstanding}.
Though this perspective looks quite different, it is equivalent to the
graph-based view.


Although the use of conflict analysis in so-called
\emph{conflict-driven clause learning (CDCL)}
SAT solvers has been hugely successful, 
one drawback is that from a mathematical point of view the resolution
proof system on which it is based is quite weak,
and is known to require proofs of exponential length even for 
simple combinatorial
principles~\citep{Haken85Intractability,Urquhart87HardExamples}.
The requirement to encode the input in 
conjunctive normal form (CNF) as a collection of disjunctive clauses
incurs a further loss in expressive power.
It has therefore been studied how to lift SAT-based conflict-driven
methods to richer input formats such as 
\emph{(linear) pseudo-Boolean (PB) constraints},
which translated from SAT to MIP is just another name for
\mbox{$0$--$1$} integer linear programs with integer coefficients.
Crucially, here it turns out that the two equivalent ways of
describing SAT conflict analysis discussed above generalize in
different directions.

Many pseudo-Boolean solvers work by translating the input to CNF,
possibly by introducing auxiliary variables,
and then perform search and conflict analysis on this representation
\citep{%
  ES06TranslatingPB,%
  MML14Open-WBO,%
  JMM15GeneralizedTotalizer,%
  SN15Construction}.
Except for any auxiliary variables, this works the same as the
\emph{graph-based conflict analysis} hitherto used in MIP solvers~\cite[]{Achterberg07ConflictAnalysis}.
Another approach, however, is to adopt the derivation-based view,
but to perform the conflict analysis derivation in a proof system adapted to
\mbox{$0$--$1$} linear inequalities.
The natural candidate for such a proof system is
\emph{cutting  planes}~\citep{CCT87ComplexityCP},
which has been extensively studied in
the area of computational complexity theory.
It is a priori not obvious what it would mean to perform conflict analysis
in the cutting planes proof system, 
but methods to do so have been designed in
\citep{%
  DG02InferenceMethods,%
  CK05FastPseudoBoolean,%
  SS06Pueblo}
and are currently used in the pseudo-Boolean solvers
\solver{Sat4j}~\citep{LP10Sat4j}
and
\solver{RoundingSat}~\citep{EN18RoundingSat}.
As a theoretical method of reasoning, such
\emph{cut-based} conflict analysis turns out to be
exponentially more powerful than the graph-based conflict analysis
yielding resolution proofs.

%
To understand the difference between the two types of conflict analysis
just discussed, it is important to note that graph-based
conflict analysis does not operate on the linear constraints of the input
problem, but instead of clauses extracted from these linear constraints by
translating implications in the conflict graph to clausal constraints. 
In marked contrast, cut-based conflict analysis acts directly on the linear
constraints and performs syntactic manipulations on them using derivation
rules in the cutting planes proof system.
When a conflict is encountered during search in the form of a violated
\emph{conflict constraint}, i.e., a constraint
that detected infeasibility within the local bounds,
the constraint that propagated the last bound change leading to
the violation is identified. The goal is then to take a linear combination
of this
\emph{reason constraint}
and the conflict constraint in such a way that the variable whose bound was
propagated is eliminated, but so that the new constraint is still violated
(even with the bound change for the propagated variable removed).
In order for this to work, the reason constraint might need to be modified
before the linear combination between the reason and conflict constraints
is generated. The bound change propagated by the reason constraint might
have exploited integrality, and if so a linear combination that cancels the
propagated variable can become feasible. The way to deal with this in
pseudo-Boolean conflict analysis is that a so-called
\emph{division} or \emph{saturation} rule is applied on the reason
constraint to make the bound propagation tight even when considered over
the reals.
This is referred to as
\emph{reduction}
of the reason constraint in pseudo-Boolean terminology, whereas in MIP
language this is nothing other than a cut applied to the reason
constraint.
Since the linear combination is violated by the current set of bound
changes, we can repeat this process again,
until we derive a constraint for which a termination criterion analogous to the
\emph{unique implication point (UIP)} notion used in graph-based conflict
analysis applies.
The modified reason constraints used in the conflict analysis, together
with the conflict constraint and the branching decisions, form an
infeasible linear program (LP) even when relaxed to real values, and the
learned constraint can be viewed as a Farkas certificate for this LP
relaxation. 


For a detailed discussion of conflict analysis in mixed integer programming
and pseudo-Boolean optimization with their commonalities and differences,,
we refer the reader to Sections 1.1 and 1.2 of the conference
paper~\citep{MBGN23ImprovingConflictAnalysisMIP}
preceding the present work.
An in-depth discussion of SAT and pseudo-Boolean conflict analysis can be
found
in~\citep{BN21ProofCplxSAT},
and for a comprehensive description of  MIP solving we refer the reader to,
\eg~\citep{achterberg2007constraint}.

We remark for completeness that there is a third way of learning from local
infeasibilities in MIP called
\emph{dual-proof  analysis}~\citep{witzig2021computational}.
It is conceptually different from both graph- and cut-based conflict
analysis, in that
(i)~it can only be applied when the infeasibility has been detected via an
infeasible LP relaxation, not by propagation, and
(ii)~the learned
\emph{dual-proof constraint}
does not depend on the history of bound changes but is constructed in a
single step.
A notable commonality with cut-based conflict analysis is that dual-proof
constraints and cut-based learned constraints are derived by
aggregating a subset of the original input constraints, and so both
methods operate  syntactically on linear constraints.
One important  difference is that in cut-based conflict analysis, the set of
constraints to aggregate and the multipliers to use are computed based on
the propagations that have been implied, and also that cuts can be applied
to the individual constraints before aggregation.
In dual-proof analysis, no cuts are needed since the LP relaxation is
already infeasible, and the multipliers are obtained from the dual solution
for the LP relaxation.
Pseudo-Boolean solving has also been combined with LP solving of
relaxed versions of the input problem, and there dual-proof analysis
has been combined with pseudo-Boolean conflict analysis~%
\citep{DGN21LearnToRelax}.

\subsection{Questions Studied in This Work
  and Our Contributions}

Our focus in this work is on how cut-based
conflict analysis
as described above 
can be integrated in MIP solvers.  A first step was
taken in \citep{MBGN23ImprovingConflictAnalysisMIP} by showing how
to apply this method
for pure binary programs.
In this paper, we go much further by applying cut-based conflict
analysis in general MIP solving. We design and implement a new conflict
analysis method that is guaranteed to work not only for
\mbox{$0$--$1$} integer linear programs but also in the presence of
continuous variables. We also extend this approach to general integer
variables, but here the method is not always guaranteed to work when
there are integer variable bound changes in the 
proper interior of the variable's domain.

An important first step is to understand pseudo-Boolean conflict
analysis described in MIP terminology. We have already hinted at such
a description above, but the key is to reinterpret the reason
reduction algorithm in pseudo-Boolean conflict analysis as finding a
cutting plane that separates a fractional point from the feasible
region defined by the reason constraint.  Armed with this perspective,
we consider different reduction algorithms for pure binary
programs, and present a new reduction method using mixed integer
rounding (MIR) cuts after selectively complementing variables.
We study how different
reduction algorithms compare, and prove, in particular,
that our new MIR-based reduction algorithm provides a stronger reduced
reason constraint than the methods used in
\citep{EN18RoundingSat}
as well in the conference version
\citep{MBGN23ImprovingConflictAnalysisMIP}
preceding this paper.

Our second main contribution is for mixed binary programs.  We show
that a naive extension of cut-based conflict analysis fails for
programs with a mix of binary and real-valued variables, even if all
branching decisions are made over the binary variables.  We then
consider a more elaborate reduction algorithm, that utilizes not just
the current reason constraint to be reduced but the full history of
previous propagations and their reason constraints---in particular,
the reasons leading to bound changes for continuous variables---and
show how this algorithm produces reduced constraints that guarantee
that the conflict analysis will work also in the presence of
real-valued variables (as long as no branching decisions are made over
these variables).

As our final algorithmic contribution, we consider general mixed
integer linear programs with integer-valued variables. Unfortunately, our approach
for mixed binary programs does not extend to this setting, and we
explain why this is so. Instead, we present a heuristic
way of performing conflict analysis also in the presence of
integer-valued variables.

We have implemented all of these reduction and conflict analysis
methods in the MIP solver \solver{SCIP}, and present the
results of our computational study on MIP instances from
\miplib2017 with quantitative data for the performance of different
conflict analysis algorithms and also qualitative data about
properties of the learned constraints.
Our experiments indicate that our proposed conflict analysis algorithm generates
useful conflict constraints and improves the default performance of the MIP
solver \scip not only in terms of running time and number of nodes in the search
tree, but also in terms of the number of instances solved.

\subsection{Organization of This Paper}
After a review of preliminaries in
\Cref{sec:preliminaries},
we formalize our MIP interpretation of pseudo-Boolean cut-based conflict
analysis in
\Cref{sec:conflict-intuition}.
\Cref{sec:reduction-techniques-for-bp}
presents a collection of different reduction algorithms,
and 
\Cref{sec:dominance}
studies dominance relationships between them.
We extend cut-based conflict analysis to mixed binary programs in
\Cref{sec:cut-based-conflict-analysis-for-mbp},
and discuss general MIP problems with integer variables in
\Cref{sec:general-integer-variables}.
The results from our computational evaluation are presented in
\Cref{sec:experiments}, after which we provide some concluding remarks
and discuss future research directions in 
\Cref{sec:conclusion}.

\section{Preliminaries and Notation}
\label{sec:preliminaries}

In this section we provide the necessary notation and definitions used throughout the paper.
We consider \emph{mixed integer programs} (MIPs) of the form
	\begin{equation}
		\label{eq:mip}
		\begin{aligned}
			 & \underset{x\in \R^n}{\text{min}} & c^\top  x &  \\
		   & \text{s.t.}                       &    Ax &  \geq \rhs,      &  &                 \\
			 &                                  & {\ell}_{\vidx} \le x_{\vidx} & \le u_{\vidx} &  &\text{ for all } j \in \{1,\dots,n\}, \\
			 &                                  &      x_{\vidx} & \in \Z     &   &\text{ for all } j \in \I,
		\end{aligned}
	\end{equation}
where $m,n \in \Zpos$, $A\in\R^{m \times n}$, $\rhs \in \R^m$, $c \in \R^n$, $\ell,u \in (\R\cup \{\pm \infty\})^n$, and $ \I \subseteq \NN := \{1,\dots,n\}$.
A single constraint hence takes the form $\sum_{\vidx} a_{\cidx\vidx}x_{\vidx} \geq \rhs_i$, where $a_{i}^\top \in \R^n$ is the $\cidx$-th row of the matrix $A$, and $\rhs_i\in\R$ the corresponding entry of the vector $\rhs$. 
For the sake of readability, we omit the index $i$ when it is not essential to the context and define a generic constraint as 
$C: \,\sum_{\vidx} a_\vidx x_{\vidx} \geq \rhs$.
By slight abuse of notation, we use $C$ also to denote the half-space~$\{x\in\R^n:\sum_{\vidx} a_\vidx x_{\vidx} \geq \rhs\}$ defined by the constraint~$C$.

Depending on the set $\I$ and the bound vectors $\ell$ and $u$, we distinguish the following special cases of MIPs.
We call \eqref{eq:mip} a \emph{mixed binary program} (MBP) if $\ell_{\vidx} = 0$ and $u_{\vidx} = 1$ for all $j \in \I$.
If additionally $\I = \{1,\dots,n\}$, then we call \eqref{eq:mip} a \emph{pure binary program} (BP).


\subsection{States and Activities}
\label{sec:notation}
When processing a node in the branch-and-bound tree, we may encounter infeasibilities. To apply conflict analysis we need information on all bound changes of variables that were applied by branching or propagation between the root node and the current node.
At each node $\nidx$, let $\bidx=0,1,2,\ldots$ index the sequence of bound changes on some variable at this node.
By $\state{\nidx,\bidx}$ we then refer to the entire state of the problem after applying all bound changes on the unique path from the root node to node~$\nidx$, including bound changes $0,1,\ldots,q$ at node~$p$.
At a node $\nidx$, the first state $\state{\nidx,0}$ always represents the problem state after the single
branching decision taken at this node.
Between two subsequent states $\state{\nidx,\bidx}$ and $\state{\nidx,\bidx+1}$, only one bound change on a single variable is recorded, and therefore either the lower or the upper bound vector differs in exactly one entry.

In conflict analysis, we typically only consider the states on the unique path
from the root node to the current node.
In this case, we can use the so-called \emph{decision level} as node index~$p$,
which gives the number of branching decisions up to and including the current
state.
Then the states on a single path can be ordered lexicographically by their indices via
\newcommand{\lexlt}{\prec}
\newcommand{\lexleq}{\preccurlyeq}
\[
(\nidx,\bidx) \lexleq (\nidx',\bidx')
:\Leftrightarrow
\nidx < \nidx' \lor (\nidx = \nidx' \land \bidx \leq \bidx').
\]
%
In the following, we often omit the node index $\nidx$ and the bound change index $\bidx$ if the specific positions are not crucial for the argument we are trying to convey.
For each state $\rho$, we can query several relevant
pieces of  
information such as
\newcommand{\bdcvar}{\textrm{varidx}}
\newcommand{\reasoncons}{\textrm{reason}}
\begin{itemize}
  \item the vector~$\loclb{}\in(\R\cup\{-\infty\})^n$ of \emph{local lower bounds} of all variables,
  \item the vector~$\locub{}\in(\R\cup\{\infty\})^n$ of \emph{local upper bounds} of all variables,
  \item the \emph{variable index}~$\bdcvar(\state{})\in\NN$ of the corresponding bound change, and 
  \item the \emph{reason constraint} $\reasoncons(\state{})$ from which the bound change was derived.
  \end{itemize}


For a generic constraint $C: \sum_{\vidx} a_\vidx \var{\vidx} \geq
\rhs$, we denote the \emph{maximal activity} of $C$ under state
$\state{}$ as 
%
$
  \maxact{}{C,\state{}} := \sum_{\vidx}
  \max\{a_\vidx \cdot \loclb{\vidx}, \, a_\vidx \cdot \locub{\vidx}\}.
$
Similarly, we define the \emph{minimal activity} of $C$ under
$\state{}$ as 
$
\minact{}{C,\state{}}:=
\sum_{\vidx} \min\{a_\vidx \cdot \loclb{\vidx}, \, a_\vidx \cdot \locub{\vidx}\}.
$
We call the constraint $C$ \emph{infeasible} under some state if its maximal activity is less than the right-hand side, \ie if the respective inequality cannot hold for any solution within the local bounds.
Otherwise, we call $C$ \emph{feasible}.
We are particularly interested in the fact whether the bound changes on a variable affect the current maximal activity of a constraint.
We call a variable~$x_j$ \emph{relaxable} for constraint~$C$ under state~$\state{}$ if
$\maxact{}{C,\state{}}$ remains unchanged when we replace its local bounds $\loclb{\vidx}$ and
$\locub{\vidx}$ by its global bounds $\lb{\vidx}$ and $\ub{\vidx}$, respectively.
\bluediff{By definition, this is the case if and only if $a_{\vidx}=0$ or $a_{\vidx}> 0 \land \locub{\vidx}=\ub{\vidx} $ or
$a_{\vidx}< 0 \land \loclb{\vidx}=\lb{\vidx}$.}
Otherwise, the variable is called \emph{non-relaxable}.
Note that these terms provide a generalization from the PB to the MIP setting.
In the literature on pseudo-Boolean optimization, where there are
  only binary variables, 
  relaxable and non-relaxable
  variables correspond to non-falsified and falsified literals, respectively.
  
\subsection{Propagation of Linear Constraints}
Beyond identifying trivial infeasibility, the maximal activity of a constraint under some state
$\rho$ can also be used to identify whether variable bounds can be tightened.
A commonly used propagator for linear constraints is the bound strengthening technique going back to~\cite[]{brearley1975analysis}. 
If $\maxact{}{C,\state{}} < \infty$, then we can rewrite constraint $C$ as
%
\begin{align}
  \label{eq:actprop}
  a_\ridx\var{r}
  \geq \rhs - \sum_{j \neq r} a_\vidx\var{\vidx}
  \geq \rhs - \max_{\var{}\in[\loclb{},\locub{}]}\sum_{j \neq r}a_\vidx\var{\vidx}
\end{align}
For $a_\ridx > 0$ we obtain
\begin{align*}
  a_\ridx\var{r}
  \geq \rhs - \maxact{}{C,\state{}} + a_\ridx\locub{r} 
\end{align*}
and dividing by $a_\ridx$ gives 
\begin{align}
  \label{eq:proplb}
  \var{r} \geq \locub{r} + \frac{\rhs - \maxact{}{C,\state{}}}{a_\ridx} =: \tilde{\ell}_r,
\end{align}
which can be used to tighten the lower bound of $\var{r}$ if $\tilde{\ell}_r > \loclb{r}$.
In a similar fashion, if $a_\ridx < 0$, we can deduce the upper bound $\var{r} \leq \tilde{u}_r := \loclb{r} + \frac{ \rhs - \maxact{}{C,\state{}}}{a_\ridx}$ if $\tilde{u}_r < \locub{r}$.
For completeness, note that if $\maxact{}{C,\state{}}=\infty$ because $a_\ridx>0 \land \locub{r}=\infty$ or $a_\ridx<0 \land \loclb{r}=-\infty$, but $\max_{\var{}\in[\loclb{},\locub{}]}\sum_{j \neq r}a_\vidx\var{\vidx} < \infty$, then \eqref{eq:actprop} can be used directly to derive valid bounds.

For integer variables, \ie for $r\in\I$, the derived bounds can be rounded to
$\ceil{\tilde{\ell}_r}$ and $\floor{\tilde{u}_r}$, respectively.
As will become clear in \Cref{sec:conflict-intuition}, we are
particularly interested in propagations where this second step is
redundant because either the variable~$x_r$ is continuous or the
propagated bounds are already
integral, 
\ie $\tilde{\ell}_r\in\Z$ and $\tilde{u}_r\in\Z$,
respectively.
We refer to such propagations as \emph{tight}.

\subsection{Coefﬁcient Tightening and Cutting Planes}

The following techniques from the PB and MIP literature to strengthen linear inequalities are key ingredients of our conflict analysis algorithms. \emph{Coefficient tightening}~\cite[]{brearley1975analysis} seeks to tighten the coefficients of integer variables in $C$ to derive a more restrictive constraint while preserving all feasible integer solutions. 

\begin{definition}[Coefficient Tightening]
  \label{def:CoefTight}
  Let $C: \sum_{\vidx \in J \cup K} a_\vidx x_{\vidx} \geq \rhs$, $x \in \Zpos^J
  \times \Rpos^{K}$, where $J \subseteq~\I, K \subseteq \NN \setminus \I$, and $a_\vidx > 0$ for all $\vidx \in J$. Further, let $\minact{}{C}$ denote the minimal activity of
  the constraint under the global bounds. Then
  \emph{coefficient tightening} 
  yields the constraint
  \begin{equation}
    \label{eq:ct}
  \sum_{\vidx \in J} \min \{a_\vidx, \tilde b\}\var{\vidx} + \sum_{k \in K} a_k\var{k}  \geq \rhs - \sum_{\vidx \in J} \max\{0,a_\vidx- \tilde b\}\lb{\vidx},
  \end{equation}
  where $\tilde b = \rhs - \minact{}{C}$.
  The general case for $a_\vidx \in \R \setminus \{0\}, j \in J$ is analogous.
  \end{definition}

  Another well known cut from the IP literature which is already used in the context of pseudo-Boolean conflict analysis is the \emph{Chv\'atal-Gomory cut}~\cite[]{chvatal1973edmonds}.
  \begin{definition}[Chv\'atal-Gomory Cut]
      \label{def:cg}
      Let $C: \sum_{ \vidx \in J} a_\vidx \var{\vidx} \geq \rhs$ with $x_{} \in \Zpos^J$, $ J \subseteq \I$. The \emph{Chv\'atal-Gomory (CG) Cut} of $C$ is given by the constraint
      \begin{equation}
        \label{eq:cg}
        \sum_{\vidx\in J} \vceil{a_\vidx}\var{\vidx}  \geq \vceil{\rhs}.
      \end{equation}
  
  \end{definition}
  To see why \eqref{eq:cg} is valid for 
  $\{ x \in \Zpos^J \, : \,  \sum_{ \vidx \in J} a_\vidx \var{\vidx} \geq \rhs \}$, we can think of it as two steps:
  rounding up coefficients on the left-hand side relaxes the constraint and
  is hence valid;
  the validity of rounding up the right-hand side follows from the integrality of the left-hand side.
  
Next, we recall  the \emph{Mixed Integer Rounding (MIR) cut}~\cite[]{marchand2001aggregation}.
\begin{definition}[Mixed Integer Rounding Cut]
    \label{def:MIRcut}
    Let $C: \sum_{ \vidx \in J\cup K} a_\vidx \var{\vidx} \geq \rhs$ with $x \in \Zpos^J \times \Rpos^K$, where $J \subseteq \I, K \subseteq \NN \setminus \I$. 
    The \emph{Mixed Integer Rounding (MIR) Cut} of $C$ is given by 
    \begin{equation}
      \label{eq:mir}
      \sum_{ \substack{\vidx \in K:\\ a_\vidx >0}} \frac{a_\vidx}{f(\rhs)} \var{\vidx} +
   \sum_{ \vidx \in J} \left(\lfloor a_j \rfloor + \min\left\{1, \frac{f(a_j)}{f(b)}\right\}\right) x_j \geq \vceil{\rhs},
    \end{equation}
    \bluediff{where $f(r) = r - \lfloor r \rfloor$  is the fractional part of $r$.}
\end{definition}
The proof that \eqref{eq:mir} is valid for 
$\{x \in \Zpos^J \times \Rpos^K \, : \,  \sum_{ \vidx } a_\vidx \var{\vidx} \geq \rhs \}$
can found in
\cite[]{marchand2001aggregation}.


  \subsection{Weakening and Complementation}
  Two operations on constraints that are used in later sections are \textit{weakening} and \textit{complementation} of variables.
  Weakening is a valid operation since it simply adds a multiple of the globally valid bound constraints 
  $\var{\vidx} \le \ub{\vidx} \Leftrightarrow -\var{\vidx} \ge -\ub{\vidx}$, or $\var{\vidx} \geq \lb{\vidx}$ to~$C$.
  Weakening a variable $\var{s}$ in a constraint $C: \,\sum_{\vidx}a_\vidx\var{\vidx} \geq \rhs$ is defined as:
  $
  \text{weaken}(C,\, \var{s}) := \sum_{\vidx \neq s} a_\vidx\var{\vidx} \geq \rhs - \text{max}\{ a_s \ub{s}, a_s \lb{s} \}.
  $
  Weakening entails a loss of information. However, as we will see, it is a necessary operation in some reduction algorithms. 
  Note that whenever weakening is applied on relaxable variables at the current
  state, it does not change the bound propagations of the remaining variables in the constraint.
  
  The second operation that we use in the reduction is complementation of variables.
  Complementation is a valid operation that does not entail a loss of information since it simply replaces a variable $\var{s}$ by its complement $\nvar{s} = \ub{s}- \var{s}$. 
  Complementing a variable $\var{s}$ in the general constraint $C$ from above is defined as:
  $
      \text{complement}(C,\, \var{s}) := \sum_{\vidx \neq s} a_\vidx\var{\vidx} - a_s \nvar{s}\geq \rhs - a_s \ub{s}.
  $

\section{The Simple Case: Conflict Analysis under Tight Propagations}
\label{sec:conflict-intuition}

Conflict analysis is applicable whenever a locally infeasible constraint is detected at a node of the branch-and-bound search.
In MIP solvers, this constraint may be detected directly during propagation or as trivially infeasible aggregation of constraints resulting from an infeasible LP relaxation~\citep{Achterberg07ConflictAnalysis}.
This can occur after many propagations on the current node,
and the main goal of conflict analysis is to obtain a constraint that would have identified the infeasibility earlier.
Such a constraint would have propagated in an earlier node, preventing us from visiting this infeasible node in the first place.%
The technique for constructing such a constraint is as follows. 

Given the constraint that found the infeasibility, called \emph{conflict constraint}, and the state history, 
we can find the last constraint that propagated a variable at a state $\rho$ which contributed to the infeasibility of the conflict constraint, called \emph{reason constraint}.
The conflict and reason constraints form a system of \emph{two} constraints that cannot be satisfied simultaneously at a state $\tilde \rho$ before $\rho$.
Now, the goal is to obtain a \emph{single} globally valid constraint that is infeasible under $\tilde \rho$.
If we have a general recipe to achieve this, this process can be repeated, using the learned constraint as the new conflict constraint.
A common criterion is to iterate this step until a so-called \emph{first unique implication point} (FUIP) is reached, i.e., until the learned constraint would propagate some variable at a state in the previous decision level. 
Note that the learned constraint may also be infeasible under the global bounds, in which case we have proven that the problem is globally infeasible.
To summarize, the fundamental problem of conflict analysis is:
Given two constraints $\conflictcon$ and $\reasoncon$, a local domain $L$, and a global domain $G$, such that
the system $\{\conflictcon, \reasoncon, L\}$ is infeasible, find a constraint $\learnedcon$ that is valid for $\{\conflictcon, \reasoncon, G\}$
such that $\{\learnedcon, L\}$ is infeasible.

In SAT, this fundamental problem is easy to solve.
Indeed, constraints in SAT are of the form $\sum_{i \in J_0} x_i + \sum_{j \in J_1} \bar x_j \geq 1$, where $\bar x_j$ is the negation of $x_j$, \ie $\bar x_j = 1 - x_j$.
The only way such a constraint can propagate is that all variables but one are fixed to 0.
Without loss of generality, let us assume that $x_r$ is the variable propagated by $\reasoncon : x_r + \sum_{i \in J_0} x_i + \sum_{j \in J_1} \bar x_j \geq 1$ in the local domain $L$.
Then, in $L$ it must be that $x_i = 0$  for all $i \in J_0$ and $x_j = 1$ for all $j \in J_1$.
Furthermore, since $\conflictcon$ is infeasible after the bound tightening $x_r = 1$, it must be of the form
$\bar x_r + \sum_{i \in J_0'} x_i + \sum_{j \in J_1'} \bar x_j \geq 1$
with $x_i = 0$ for all $i \in J_0'$ and $x_j = 1$ for all $j \in J_1'$ in $L$.
The learned constraint $\learnedcon$ is then obtained by the so-called \emph{resolution rule}, 
which is nothing else than taking the sum of both constraints,
\begin{equation} \label{eq:theSum}
    \sum_{i \in J_0} x_i + \sum_{i \in J_0'} x_i  + \sum_{j \in J_1 } \bar x_j +\sum_{j \in J_1'} \bar x_j \geq 1.
\end{equation}
and applying coefficient tightening to \eqref{eq:theSum} which yields 
\begin{equation} \label{eq:theSum2}
    \sum_{i \in J_0 \cup J_0'} x_i + \sum_{j \in J_1 \cup J_1'} \bar x_j \geq 1.
\end{equation}

This constraint is clearly valid in any global domain $G$ and infeasible in $L$ and also corresponds to the constraint obtained by resolution in graph-based conflict analysis \cite[]{Achterberg07ConflictAnalysis}.
%
%
Note that the above argument didn't use integrality information.
\begin{remark}
We can interpret the above technique as trying to eliminate $x_r$ from the system  $\{\conflictcon, \reasoncon, L\}$ by Fourier-Motzkin elimination.
Indeed, the system is still infeasible even when we relax the domain of $x_r$ to $(-\infty, \infty)$.
Then, by Fourier-Motzkin elimination \cite[]{Williams76FourierMotzkin}, projecting out $x_r$ yields exactly \eqref{eq:theSum}
Note that this immediately proves the infeasibility of
\eqref{eq:theSum} in $L$, since the projection of the empty set is the
empty set.
\end{remark}
The idea above can be generalized to linear constraints. 
The linear combination of the constraints that eliminates a given variable followed by coefficient tightening is what is known as \emph{generalized resolution}~\cite[]{Hooker88Generalized,Hooker92Generalized}.
In the context of conflict analysis, the learned constraint is also called the \emph{resolvent}.
As the following example shows, the naive application of generalized resolution does not necessarily guarantee that the resolvent is infeasible in the local domain.

\begin{example}
   \label{ex:noconflict}
   Consider the two pure binary constraints $C_r : x_1 + x_2 +2x_3 \ge 2$,  $C_c : x_1 - 2x_3 + x_4 + x_5 \ge 1$, 
   $L = \{0\} \times \{0,1\}^4$, and $G = \{0,1\}^5$.
   Under $L$, constraint~$C_r$ propagates $x_3 \geq 0.5$. Since $x_3$ is binary, we conclude that $x_3 \geq 1$.
   Then, $C_c$ is clearly infeasible.
   Eliminating $x_3$, by adding the two constraints and applying coefficient tightening, we obtain
   the constraint $2x_1 + \var{2} + \var{4} + \var{5} \ge 3$ which
   is globally valid, but not infeasible under $L$.
\end{example}

A notable difference between the SAT case and Example~\ref{ex:noconflict} is
that in Example~\ref{ex:noconflict} we had to use integrality information
because $\reasoncon$ did not propagate $x_3$ \emph{tightly}, i.e., to an integer value.
Indeed, this is the only reason why generalized resolution can fail.
The following proposition shows that whenever no integrality information is used during the propagation of $\reasoncon$, the 
resolvent of $\reasoncon$ and $\conflictcon$ that eliminates $\var{\ridx}$ remains infeasible.

\begin{proposition}
    \label{prop:tight-prop-resolution}
        Let $\reasoncon: a_\ridx \var{\ridx}+ \sum_{ \vidx \neq \ridx } a_\vidx
        \var{\vidx} \geq \rhs$ be a constraint propagating a variable $\var{\ridx}$ in the state $\state{}$ tightly.
        Further, assume that $\conflictcon: a'_{\ridx}\var{\ridx} + \sum_{ \vidx \neq \ridx } a'_{\vidx}\var{\vidx} \geq b'$ becomes infeasible in the state $\state{}$. 
        Then the resolvent of $\reasoncon$ and $\conflictcon$ that eliminates $\var{\ridx}$ remains infeasible in the state $\state{}$.
\end{proposition}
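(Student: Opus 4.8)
The plan is to work directly with maximal activities and to show that the linear combination underlying the resolvent already has maximal activity strictly below its right-hand side under $\state{}$; coefficient tightening can then only preserve this infeasibility. First I would fix signs. Since $\reasoncon$ propagates $\var{\ridx}$, assume without loss of generality $a_\ridx > 0$, so that the propagation tightens the lower bound of $\var{\ridx}$ to $\loclb{\ridx} = \tilde{\ell}_\ridx$. For the bound change on $\var{\ridx}$ to render $\conflictcon$ infeasible, its coefficient must satisfy $a'_\ridx < 0$, since otherwise raising the lower bound of $\var{\ridx}$ could not decrease $\maxact{}{\conflictcon,\state{}}$; the case $a_\ridx < 0$ is symmetric. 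The resolvent eliminating $\var{\ridx}$ is then the positive combination $\lambda\reasoncon + \mu\conflictcon$ with $\lambda = -a'_\ridx > 0$ and $\mu = a_\ridx > 0$, so that $\lambda a_\ridx + \mu a'_\ridx = 0$.

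Next I would record the two facts supplied by the hypotheses, writing $M := \sum_{\vidx \neq \ridx}\max\{a_\vidx\loclb{\vidx}, a_\vidx\locub{\vidx}\}$ and $M' := \sum_{\vidx \neq \ridx}\max\{a'_\vidx\loclb{\vidx}, a'_\vidx\locub{\vidx}\}$ for the activities of $\reasoncon$ and $\conflictcon$ over the variables other than $\var{\ridx}$ (these are unaffected by the bound change on $\var{\ridx}$). Because the propagation is tight, the bound $\tilde{\ell}_\ridx = (\rhs - M)/a_\ridx$ is attained exactly over the reals, which yields the \emph{equality} $a_\ridx \tilde{\ell}_\ridx = \rhs - M$. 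Infeasibility of $\conflictcon$ under $\state{}$ reads $a'_\ridx \tilde{\ell}_\ridx + M' < b'$. This equality is the crux of the argument: it is exactly where tightness enters, and it is precisely what fails in Example~\ref{ex:noconflict}, where rounding forces $a_\ridx \tilde{\ell}_\ridx > \rhs - M$ and reverses the inequality we need.

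Then I would bound the maximal activity of the combination over the remaining variables by subadditivity of the maximum, $\maxact{}{\lambda\reasoncon + \mu\conflictcon,\state{}} \le \lambda M + \mu M'$, substitute $\lambda M = \lambda\rhs - \lambda a_\ridx \tilde{\ell}_\ridx$ from the equality and $\mu M' < \mu b' - \mu a'_\ridx \tilde{\ell}_\ridx$ from the infeasibility of $\conflictcon$, and add the two. The $\tilde{\ell}_\ridx$-terms collect into $-(\lambda a_\ridx + \mu a'_\ridx)\tilde{\ell}_\ridx = 0$, leaving $\maxact{}{\lambda\reasoncon + \mu\conflictcon,\state{}} < \lambda\rhs + \mu b'$, which is exactly the right-hand side of the combination. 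Hence the linear combination is infeasible under $\state{}$.

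Finally, I would check that coefficient tightening preserves this: the tightened constraint implies the untightened combination at every point respecting the (global, hence also local) lower bounds, so its local solution set is contained in that of the combination and is therefore likewise empty. Since the resolvent contains no $\var{\ridx}$ term, its infeasibility under $\state{}$ coincides with infeasibility in the state immediately before the bound change on $\var{\ridx}$, which is what conflict analysis requires. I expect the only genuine obstacle to be keeping the inequalities pointed in the right direction, and this rests entirely on the tight-propagation equality together with the cancellation $\lambda a_\ridx + \mu a'_\ridx = 0$; the activity bookkeeping and the preservation under coefficient tightening are routine.
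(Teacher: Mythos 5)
Your proof is correct and follows essentially the same route as the paper's: both arguments combine the tight-propagation equality $a_\ridx\tilde{\ell}_\ridx = \rhs - M$ with the strict infeasibility inequality for $\conflictcon$, and then use subadditivity of the maximal activity together with the cancellation of the $\var{\ridx}$-terms to show that the resolvent's maximal activity falls strictly below its right-hand side. The only differences are cosmetic: the paper normalizes $a_\ridx = 1$, $a'_{\ridx} = -1$ instead of carrying the multipliers $\lambda,\mu$ explicitly, and it leaves the opposite-sign condition and the preservation of infeasibility under coefficient tightening implicit, both of which you verify in detail.
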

\begin{proof}
    Since $a_\ridx$ and $a'_{\ridx}$ are both non-zero and have opposite signs,
    without loss of generality, we assume that $a_\ridx = 1$ and $a'_{\ridx} = -1$.
    Then, the constraint $\reasoncon$ propagates a lower bound $\beta$ on the variable $\var{\ridx}$, namely
    \[
         \var{\ridx}
         \geq \min_{x\in[\loclb{},\locub{}]}\{b - {\sum_{ \vidx \neq \ridx } a_\vidx \var{\vidx}}\}
         = b - \max_{x\in[\loclb{},\locub{}]}{\sum_{ \vidx \neq \ridx} a_\vidx \var{\vidx}}
         =: \beta.
    \]
    Now, since $\conflictcon$ is infeasible in the state $\state{}$, we have that
    \[ 
        -\beta + \max_{x\in[\loclb{},\locub{}]}\sum_{ \vidx \neq \ridx } a'_{\vidx} \var{\vidx} < b'
    \]
    The right-hand side of the resolvent $C_{\text{res}} := \reasoncon + \conflictcon$ is $b + b'$.
    However, its maximum activity is
    \begin{align*}
        \max_{x\in[\loclb{},\locub{}]}{\sum_{ \vidx \neq \ridx } (a_\vidx + a'_{\vidx}) \var{\vidx}}
        & \leq \max_{x\in[\loclb{},\locub{}]}{\sum_{ \vidx \neq \ridx} a_\vidx \var{\vidx}}
        + \max_{x\in[\loclb{},\locub{}]}\sum_{ \vidx \neq \ridx } a'_{\vidx} \var{\vidx} \\  
        &  = b - \beta + \max_{x\in[\loclb{},\locub{}]}\sum_{ \vidx \neq \ridx } a'_{\vidx} \var{\vidx} < b + b',
    \end{align*}
    which shows that $C_{\text{res}}$ is infeasible in the state $\state{}$.
\end{proof} 
To obtain a less syntactic, more geometric perspective, let us view the lower bound change on a variable $x_r$ from propagation of $ \reasoncon: a_\ridx \var{\ridx}+ \sum_{ \vidx \neq \ridx } a_\vidx
        \var{\vidx} \geq \rhs$, $a_r > 0$  as the solution of the one-row linear program
\begin{equation} 
    \label{eq:1-row}
    \min \{x_r \, : \, a_r x_r + \sum_{\vidx \neq r } a_\vidx x_\vidx \geq \rhs, \, x_\vidx \in [\loclb{\vidx}, \locub{\vidx}] \}.
\end{equation}
Non-tight propagation can occur if and only if the optimum of \eqref{eq:1-row}
over the current state is attained at a
non-integer vertex, as illustrated in~\Cref{fig:nonintegervertex} for the reason constraint of \Cref{ex:noconflict}.
By contrast, for SAT constraints the feasible region defined by the reason
constraint and the global domain does not contain non-integer vertices.
This geometric perspective also shows a clear path forward to make conflict analysis work:
by strengthening the propagating reason constraint~$\reasoncon$ in order to cut off the non-integer vertex from the feasible region $\reasoncon \cap G$.
Note that for pure binary programs the reason is a knapsack constraint, and hence it is sufficient to
study cuts for the knapsack polytope~\cite[]{hojny2020knapsack}.
In the PB literature, the techniques to achieve such cuts are called
\emph{reduction techniques}.
In the next section, we first present the cut-based conflict analysis algorithm and then
proceed to discuss various reduction techniques.
\begin{figure}[h]
    \centering
    \begin{tikzpicture}
        \begin{axis}[
            axis lines = left,
            xlabel = {$x_2$},
            ylabel = {$x_3$},
            xmin = 0, xmax = 1,
            ymin = 0, ymax = 1,
            xtick = {0, 0.5, 1},
            ytick = {0, 0.5, 1},
            tick label style={font=\small},
            domain = 0:1,
            samples = 100,
            width = 5cm,
            height = 5cm,
        ]
    
        \addplot [
            fill=blue!20, 
            draw=none,
            domain=0:1,
        ] { (2 - x) / 2 } |- (axis cs:1,1) |- (axis cs:0,1) \closedcycle;

        \addplot [
            thick, 
            blue,
        ] { (2 - x) / 2 };
        
        \addplot[
            only marks,
            mark=*,
            mark size=2pt,
            color=red
        ]
        coordinates {
            (1, 0.5)
        };
    
        \end{axis}
    \end{tikzpicture}
    \caption{Inequality $x_1 + x_2 + 2x_3 \geq 2$ on the face of the polytope with $x_1 = 0$. The fractional vertex is $(0, 1, 0.5)$.}
    \label{fig:nonintegervertex}
\end{figure}
\section{Reduction Techniques for Pure Binary Programs}
\label{sec:reduction-techniques-for-bp}

\Cref{algo:CA} shows the base algorithm for all variants of cut-based conflict analysis considered in this paper.
The algorithm is initialized with an infeasible state $\state{\nidx,\bidx}$ and a conflicting constraint
$\conflictcon$ in $\state{\nidx,\bidx}$.
First, the learned conflict constraint $\learnedcon$ is set to the conflict constraint $\conflictcon$.
In each iteration, the state $\state{\nidx,\bidx}$ is 
set to the smallest state, with respect to the lexicographic order, such that the learned conflict constraint is infeasible,
and we extract the variable $\varr$ whose bound was changed in $\state{\nidx,\bidx}$. 
If the bound change was due to propagation of a constraint, then we extract the reason 
constraint $\reasoncon$ that propagated~$\varr$. In line \ref{line:reduce} we ``reduce'' 
the reason constraint such that the resolvent, \ie linear combination, of $\learnedcon$ and the reduced reason
$\reasoncon$ (Line \ref{line:resolve}) that cancels $\varr$ remains infeasible.
The learned conflict constraint is set to the resolvent, 
which can then be strengthened in Line \ref{line:strengthen} by, e.g., applying coefficient tightening as in SAT resolution conflict analysis.
We continue this process until we reach an
FUIP ($\learnedcon$ is \emph{asserting}) or $\learnedcon$ proves global infeasibility.
\newcommand{\reduce}{\textnormal{reduce}}
\newcommand{\resolve}{\textnormal{resolve}}
\newcommand{\strengthen}{\textnormal{strengthen}}
\begin{algorithm}[h]

    \DontPrintSemicolon
	\SetKwInOut{Input}{Input}\SetKwInOut{Output}{Output}
	\SetKwInOut{Init}{Initialization}
    \SetArgSty{upshape}
    \SetKwFunction{pop}{pop}
    \SetKwFunction{reason}{reason}
	\Input{initial conflict constraint $\conflictcon$, infeasible state $\state{\nidx,\bidx}$}
	\Output{learned conflict constraint $\learnedcon$}
	$\learnedcon \leftarrow \conflictcon$ \;
    \While{$\learnedcon$ not asserting \textbf{and} $\learnedcon \neq  \, \perp$\label{line:startloopCA}}
	{
        $( \nidx,\bidx ) \leftarrow \min \{ (\tilde{\nidx},\tilde{\bidx}) \, | \, \learnedcon \text{ is infeasible in } \state{\tilde{\nidx},\tilde{\bidx}}\} $\;
        $ \ridx\leftarrow \bdcvar(\state{\nidx,\bidx})$\;
        \If{ $\var{\ridx}$ propagated }
        {
            $\reasoncon \leftarrow \reasoncons(\state{\nidx,\bidx} )$\;
            $\reasoncon \leftarrow \reduce(\reasoncon,\learnedcon, \state{\nidx,\bidx})$\label{line:reduce}\;
            $\learnedcon \leftarrow \resolve(\learnedcon,\reasoncon,\varr)$\label{line:resolve}\;
            $\learnedcon \leftarrow \strengthen(\learnedcon)$\label{line:strengthen} \;
        }
   }
 	\Return{$\learnedcon$}\;
	\caption{Cut-Based Conflict Analysis for 0-1 IP}
    \label{algo:CA}
\end{algorithm}
The conflict analysis algorithm can fail if the resolvent becomes feasible.
As explained in the previous section, this can occur only if the reason
constraint $\reasoncon$ does not propagate tightly.
In the following, we present various reduction algorithms for
the reason constraint $\reasoncon$ that ensure that this constraint propagates tightly enough:
the SAT-like \emph{clausal-based reduction}~\cite{Achterberg07ConflictAnalysis}, which reduces the reason
constraint to a clause responsible for the propagation; the \emph{coefficient
tightening-based reduction}, which is a generalization of \cite[]{CK05FastPseudoBoolean}; and a stronger version 
of the \emph{MIR-based reduction} presented in~\cite[]{MBGN23ImprovingConflictAnalysisMIP}.

As we mentioned at the end of last section, the goal of reduction algorithms
is to obtain a constraint that propagates a variable tightly from a reason constraint
that did not propagate tightly.
Therefore, reduction algorithms only look at a single given constraint and
the current domain in a state $\state{}$.
Hence, in this section we work under the following assumption.
\begin{assumption} \label{ass:reasonForm}
    The reason constraint is of the form 
    $
        \reasoncon: x_r + \sum_{ j \in J } a_j x_j \geq b,
    $
    where $r\not\in J \subseteq \I$ and $a_j \geq 0$ for all $j \in J$.
    Furthermore, $\reasoncon$ propagates $x_r$ in $\state{}$ non-tightly, \ie 
    $ b -   \max \sum_{ j \in J } a_j x_j  \, \not\in\, \Z. $
\end{assumption}
This assumption can be made without loss of generality.
%
Indeed, from a general reason constraint we can first complement any variable with a negative coefficient.
Afterwards, we can divide by the coefficient of the variable $x_r$ to obtain the aforementioned form.
Obviously, the information in both constraints is exactly the same.

\subsection{Coefficient Tightening-based Reduction}
\label{sec:CoefficientTightening}
\cref{algo:ReduceSAT} summarizes the coefficient tightening-based reduction. 
Similar to the implementation in a PB solver \cite[]{CK05FastPseudoBoolean}, in each iteration, the algorithm picks a
relaxable variable in the reason constraint different from the variable we are resolving on and weakens it.
Then it applies coefficient tightening to the
resulting constraint. 
After each iteration, the algorithm checks if the resolvent of the reason constraint and conflict constraint is infeasible.
In this case, the algorithm terminates and returns the reduced reason constraint.
A proof that the reduction algorithm produces a reason constraint that
propagates strongly enough, in that the reduced reason is guaranteed to
propagate tightly after all relaxable variables have been weakened, can be
found in~\citep{MBGN23ImprovingConflictAnalysisMIP}.
\newcommand{\weaken}{\textnormal{weaken}}
\newcommand{\coefTight}{\textnormal{coefTight}}
\begin{algorithm}[h]
    \DontPrintSemicolon
	\SetKwInOut{Input}{Input}\SetKwInOut{Output}{Output}
	\SetKwInOut{Init}{Initialization}
	\Input{conflict constraint $\conflictcon$, reason constraint $\reasoncon$, variable to resolve $\varr$, state $\state{}$}
	\Output{reduced reason $\reasoncon$}
   \While{$\resolve(\reasoncon,\conflictcon,\varr)$ is feasible in $\state{}$}
    {
        $\var{s} \leftarrow$ relaxable variable in $ \reasoncon\backslash \{\varr\}$ \;
        $\reasoncon \leftarrow \weaken(\reasoncon,\var{s})$ \;
      
        $\reasoncon \leftarrow 
        \coefTight(\reasoncon)$ \;
    }
 	\Return{$\reasoncon$}\;

	\caption{Coefficient Tightening-based Reduction Algorithm \label{algo:ReduceSAT}}
\end{algorithm}

\subsection{cMIR-based Reduction}
\label{sec:MIR}
For pure binary constraints, a very competitive alternative to coefficient tightening in the reduction algorithm is based on Chvátal-Gomory cuts \cite[]{EN18RoundingSat}.
In this reduction, relaxable variables with fractional coefficients in the reason constraint are weakened before 
applying Chvátal-Gomory rounding  as in \eqref{def:cg}. In \cite[]{MBGN23ImprovingConflictAnalysisMIP} we show that applying the more general MIR cut instead 
yields a reduced reason constraint that is at least as strong.

\newcommand{\poscompl}{P}
\newcommand{\negcompl}{N}
\newcommand{\poscoef}{a_\vidx > 0}
\newcommand{\negcoef}{a_\vidx < 0}
\newcommand{\posnocompl}{\substack{\vidx: \poscoef, \\ \vidx \notin \poscompl}}
\newcommand{\negnocompl}{\substack{\vidx: \negcoef, \\ \vidx \notin \negcompl}}
\newcommand{\posnocomplforslack}{\substack{\vidx: \poscoef, \\ \vidx \notin \poscompl, \locub{\vidx} = 1}}
\newcommand{\negnocomplforslack}{\substack{\vidx: \negcoef, \\ \vidx \notin \negcompl, \loclb{\vidx} = 1}}
\newcommand{\unchangedslack}{S}

Next, we present a further improved reduction technique also based on the MIR formula
\eqref{def:MIRcut}.
The improvement comes from the fact that weakening becomes obsolete after complementing relaxable variables.
We call this reduction \emph{cMIR-based reduction}.
In \Cref{sec:dominance} we show one of the main results of this paper, which is the dominance of cMIR-based reduction 
over both Chvátal-Gomory and MIR-based reduction from \cite[]{MBGN23ImprovingConflictAnalysisMIP}.
But first, we show that the reduced reason
constraint from the cMIR-based reduction propagates $x_r$ tightly.
\begin{proposition} \label{prop:mirReduction}
    Let $\reasoncon$ be as in Assumption \ref{ass:reasonForm}.     
    Complementing all  variables in $P = \{ j \in J \, : \, \locub{j} = 1 \} $ 
    and applying MIR gives the reduced reason constraint
    \begin{equation} \label{eq:cMIR}
     \reasoncMIR : x_r + \sum_{j \notin P} \psi(a_j) x_j - \sum_{j \in P} \psi(-a_j) x_j \geq 1 - \sum_{j \in P} \psi(-a_j) 
    \end{equation}
    with
    \[ 
        \psi(a) = \lfloor a \rfloor + \min\left\{1, \frac{f(a)}{f(b - \sum_{j \in P} a_j)}\right\}.
    \]
    $\reasoncMIR$ propagates $x_r$ tightly.
\end{proposition}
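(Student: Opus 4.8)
The plan is to obtain \eqref{eq:cMIR} by a change of variables (complementation), followed by a single MIR rounding, and then by undoing the change of variables; afterwards I read off the propagated bound directly from \eqref{eq:cMIR}. Write $\tilde b := b - \sum_{j\in P} a_j$. Since every $x_j$ with $j\in J\setminus P$ has $\locub{j}=0$ while every $x_j$ with $j\in P$ has $\locub{j}=1$, the maximal activity of $\sum_{j\in J} a_j x_j$ in $\state{}$ equals $\sum_{j\in P} a_j$, so the (non-tight) bound propagated on $x_r$ is exactly $\tilde b$. Complementing each $x_j$, $j\in P$, via $\mathrm{complement}(\cdot,\cdot)$ with $\ub{j}=1$ turns $\reasoncon$ into
\[
x_r + \sum_{j\notin P} a_j x_j - \sum_{j\in P} a_j \nvar{j} \ge \tilde b,
\]
a constraint over non-negative integer variables only, to which the integer part ($K=\emptyset$) of the MIR formula of \Cref{def:MIRcut} applies.

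The key preliminary observation is that $\tilde b\in(0,1)$. Since $\reasoncon$ does propagate the binary variable $x_r$, the bound $\tilde b$ must be strictly positive (otherwise it would not tighten $\loclb{r}\ge 0$) and at most $1$ (otherwise $x_r\ge\ceil{\tilde b}\ge 2$ would be infeasible rather than a propagation); the non-tightness hypothesis $\tilde b\notin\Z$ then forces $\tilde b\in(0,1)$. Consequently $\ceil{\tilde b}=1$ and $f(\tilde b)=\tilde b$, matching the denominator appearing in $\psi$. Applying MIR to the complemented constraint yields $\psi(1)\,x_r + \sum_{j\notin P}\psi(a_j)x_j + \sum_{j\in P}\psi(-a_j)\nvar{j}\ge 1$, where $\psi(1)=1$ because $f(1)=0$. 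Substituting $\nvar{j}=1-x_j$ for $j\in P$ and moving the resulting constants to the right-hand side gives exactly \eqref{eq:cMIR}.

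It remains to verify tight propagation, for which the crucial sign fact is $\psi(-a_j)\le 0$ for every $j\in P$. I would check this by a short case distinction: if $a_j\in\Z$ then $\psi(-a_j)=-a_j\le 0$ since $a_j\ge 0$; if $a_j\notin\Z$ then $\psi(-a_j)=-\ceil{a_j}+\min\{1,(1-f(a_j))/\tilde b\}\le -\ceil{a_j}+1\le 0$. Hence in \eqref{eq:cMIR} each $x_j$, $j\in P$, carries the non-negative coefficient $-\psi(-a_j)$ and has local upper bound $1$, while each $x_j$, $j\notin P$, is fixed to $0$. The maximal activity of all terms other than $x_r$ is therefore $-\sum_{j\in P}\psi(-a_j)$, attained at $x_j=1$ for $j\in P$, so the bound propagated on $x_r$ is $\bigl(1-\sum_{j\in P}\psi(-a_j)\bigr)-\bigl(-\sum_{j\in P}\psi(-a_j)\bigr)=1\in\Z$, i.e.\ tight.

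The routine parts are the two $\psi$ coefficient computations; the substantive steps are pinning down $\tilde b\in(0,1)$, which makes $\ceil{\tilde b}=1$ and hence fixes the cut's right-hand side to the integer $1$, and the sign bound $\psi(-a_j)\le 0$. I expect the latter to be the main thing to get right, as it is what guarantees that the relaxable, complemented variables contribute a clean cancellation against the constant $-\sum_{j\in P}\psi(-a_j)$, leaving the propagated bound exactly at the integer $1$.
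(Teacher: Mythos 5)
Your proposal is correct and follows essentially the same route as the paper's proof: complement the variables in $P$, observe that the resulting right-hand side $\tilde b = b - \sum_{j\in P} a_j$ lies in $(0,1)$ by the non-tightness assumption, apply MIR, complement back, and then use $\psi(-a_j)\le 0$ together with the fixing of the variables outside $P$ to compute the propagated bound as exactly $1$. The only difference is that you spell out the justifications for $\tilde b\in(0,1)$ and for the sign fact $\psi(-a_j)\le 0$ (via the integer/non-integer case split), which the paper states without detailed proof.
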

\begin{proof}
Complementing the variables in $P$ yields
    \begin{align}
        \label{eq:ComplementBeforeMIR}
        \var{r}
        + \bigsum{\vidx \notin P} a_\vidx \var{\vidx}
        - \bigsum{\vidx \in P}a_\vidx \nvar{\vidx} \geq \rhs 
        - \bigsum{\vidx \in P} a_\vidx =: \tilde b.
    \end{align}
    Note that $0 < \tilde b < 1$ by Assumption 1
    Indeed, since $\reasoncon$ propagates non-tightly, we have that
    $\min \{ b - \sum_{j \in J} a_j x_j \}$ is between 0 and 1.
        After applying MIR to the complemented constraint we obtain
        \begin{equation}
            \reasoncMIR : x_r + \sum_{j \notin P} \psi(a_j) x_j + \sum_{j \in P} \psi(-a_j) \bar x_j \geq 1 
        \end{equation}
        where
        \[ 
            \psi(a) = \lfloor a \rfloor + \min\left\{1, \frac{f(a)}{f(\tilde b)}\right\}.
        \]
    Complementing back gives us (9).
    Finally, we can use the fact that for $j \notin P$, $x_j$ is fixed to 0 and that $\psi(-a_j) \leq 0$ to show that $\reasoncMIR$ propagates $x_r$ tightly. The propagated bound is
    \begin{align*}
      1  - \sum_{j \in P} \psi(-a_j) - \max\Big\{&\sum_{j \notin P} \psi(a_j) x_j - \sum_{j \in P} \psi(-a_j) x_j\Big\} \\
            = 1  - \sum_{j \in P} \psi(-a_j)  -  \Big( &\sum_{j \notin P} \psi(a_j)\cdot 0 - \sum_{j \in P} \psi(-a_j) \cdot 1 \Big)   
            = 1. 
    \end{align*}
\end{proof}
\begin{remark}
    The reduced constraint $\reasoncMIR$ is precisely the Gomory mixed integer cut \cite[]{Gomory1960TR} generated from the optimal tableau
    obtained by solving the one-row LP \eqref{eq:1-row}.
\end{remark}

\section{Dominance Relationships}
\label{sec:dominance}

A natural goal is to find a reduction technique that yields the strongest
possible reduced constraint to use in the resolution step of conflict analysis.
In this section, we discuss dominance
relationships between the different reduction techniques in the following sense.
\begin{definition}
    A constraint $C''$ \textbf{dominates} a constraint $C'$ if any $\tilde{x}
    \in [\lb{},\ub{}]$ that satisfies $C''$ also satisfies $C'$. In other
    words, the set of feasible points defined by the variables bounds and $C''$
    is a subset of the set of feasible points defined by the variables bounds
    and $C'$.
\end{definition}

The main goal of this section is to compare the cMIR-based reduction from \Cref{sec:MIR} with the wMIR-based reduction from \cite[]{MBGN23ImprovingConflictAnalysisMIP}, which we recall next.
Let $\reasoncon: x_r + \sum_j a_j x_j \geq b$ be as in Assumption~\ref{ass:reasonForm}, propagating $x_r$ non-tightly.
Then the reduced constraint
\begin{equation} \label{eq:wMIR}
 \reasonwMIR : x_r + \sum_{j \in P_Z} a_j x_j + \sum_{j \notin P} \psi_w(a_j) x_j \geq \left\lceil b - \sum_{j \in P_W} a_j\right\rceil 
\end{equation}
propagates $x_r$ tightly, where $P = \{j : u_j = 1\}$, $P_W = \{j \in P : a_j \notin \mathbb{Z}\}$,
$P_Z = \{j \in P : a_j \in \mathbb{Z}\}$, and 
\[ 
    \psi_w(a) = \lfloor a \rfloor + \min\left\{1, \frac{f(a)}{f(b - \sum_{j \in P_W} a_j)}\right\}.
\]
The above formula comes from weakening the variables in $P_W$ and then applying the MIR cut to the resulting constraint.
In what follows we show that this is not the strongest possible reduction.
\begin{proposition}
    \label{prop:complementation-domination2}
    Let $\state{}$ be the current state and $\reasoncon: x_r + \sum_{j \in J} a_j x_j \geq b$
    be as in Assumption~\ref{ass:reasonForm},
    then $\reasoncMIR$~\eqref{eq:cMIR} dominates $\reasonwMIR$~\eqref{eq:wMIR}.
\end{proposition}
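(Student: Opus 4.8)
The plan is to prove dominance by exhibiting $\reasonwMIR$ as the result of \emph{weakening} the variables indexed by $P_W$ in $\reasoncMIR$. Since weakening only adds nonnegative multiples of the globally valid bound inequalities $x_j \le 1$, the weakened constraint is implied by $\reasoncMIR$ together with the variable bounds. Hence any $\tilde x \in [\ell,u]$ satisfying $\reasoncMIR$ automatically satisfies the weaker constraint, which is exactly the dominance claimed in the proposition. Concretely, for a nonnegative coefficient $\gamma_s$ and a binary variable, weakening deletes the term $\gamma_s x_s$ and lowers the right-hand side by $\gamma_s$, and a point with $\tilde x_s \le 1$ satisfying the original inequality satisfies the weakened one because $\gamma_s \tilde x_s \le \gamma_s$.

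First I would reconcile the two rounding functions. Writing $\tilde b := b - \sum_{j\in P} a_j$ and $\tilde b_w := b - \sum_{j \in P_W} a_j$, and using that $P$ is the disjoint union of $P_W$ and $P_Z$ with every $a_j$ ($j\in P_Z$) an integer, the two quantities differ by the integer $\sum_{j \in P_Z} a_j$, so $f(\tilde b) = f(\tilde b_w)$. Therefore the denominators in the definitions of $\psi$ in~\eqref{eq:cMIR} and $\psi_w$ in~\eqref{eq:wMIR} coincide and $\psi \equiv \psi_w$ as functions. By Proposition~\ref{prop:mirReduction} we also have $0 < \tilde b < 1$, which gives $\lceil\tilde b\rceil = 1$ and hence $\lceil \tilde b_w\rceil = 1 + \sum_{j \in P_Z} a_j$. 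I would then simplify the $P$-coefficients of $\reasoncMIR$: for $j \in P_Z$ integrality of $a_j$ yields $-\psi(-a_j) = a_j$, while for $j \in P_W$ the inequality $\psi(-a_j) \le 0$ (established in the proof of Proposition~\ref{prop:mirReduction}) shows that the coefficient $-\psi(-a_j)$ is nonnegative, so the $P_W$ terms are legitimate candidates for weakening.

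Finally I would weaken every $j \in P_W$ in $\reasoncMIR$. On the left-hand side this deletes the $P_W$ terms and, using $-\psi(-a_j)=a_j$ on $P_Z$ and $\psi\equiv\psi_w$ on the indices $j\notin P$, leaves exactly $x_r + \sum_{j \notin P}\psi(a_j)x_j + \sum_{j \in P_Z} a_j x_j$, matching the left-hand side of~\eqref{eq:wMIR}. On the right-hand side, weakening decreases $1 - \sum_{j\in P}\psi(-a_j)$ by $\sum_{j\in P_W}\bigl(-\psi(-a_j)\bigr)$, turning it into $1 - \sum_{j\in P_Z}\psi(-a_j) = 1 + \sum_{j\in P_Z} a_j = \lceil\tilde b_w\rceil$; so the weakened constraint coincides with $\reasonwMIR$. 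The step I expect to be the main obstacle is precisely this right-hand-side bookkeeping: one must check that the cumulative decrease from the $P_W$ weakenings cancels the $\sum_{j\in P_W}\psi(-a_j)$ contribution and lands on $\lceil\tilde b_w\rceil$. This hinges on the two arithmetic facts $f(\tilde b)=f(\tilde b_w)$ and $0<\tilde b<1$; once they are in place, the coefficient identities on $P_Z$ and on $j\notin P$ align the two constraints term by term, and dominance follows because weakening only relaxes a constraint over the box.
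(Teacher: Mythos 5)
Your proof is correct and follows essentially the same route as the paper: both arguments partition $P$ into $P_Z$ and $P_W$, use $f\bigl(b - \sum_{j \in P_W} a_j\bigr) = f\bigl(b - \sum_{j \in P} a_j\bigr)$ to identify $\psi$ with $\psi_w$, use $\psi(-a_j) = -a_j$ on $P_Z$ and $\psi(-a_j) \le 0$ on $P_W$, and then recognize $\reasonwMIR$ as exactly the constraint obtained from $\reasoncMIR$ by discarding the nonnegative $P_W$ terms (the paper phrases this as dropping nonpositive complemented terms, you phrase it as the weakening operation---the same step). The right-hand-side bookkeeping you flag as the main obstacle is handled identically in the paper, via $\lceil b - \sum_{j \in P_W} a_j \rceil = 1 + \sum_{j \in P_Z} a_j$.
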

\begin{proof}

    Let $P = \{j \in J : u_j^\rho = 1\}$, $P_W = \{j \in P : a_j \notin \mathbb{Z}\}$, and
    $P_Z = \{j \in P : a_j \in \mathbb{Z}\}$.
    The constraint $\reasoncMIR$ is given by
    \[
        x_r + \sum_{j \notin P} \psi_c(a_j) x_j - \sum_{j \in P} \psi_c(-a_j) x_j \geq 1 - \sum_{j \in P} \psi_c(-a_j),
    \]
    where 
    \[ 
        \psi_c(a) = \lfloor a \rfloor + \min\left\{1, \frac{f(a)}{f(b - \sum_{j \in P} a_j)}\right\}.
    \]
    This can be rewritten as
    \[
        x_r + \sum_{j \notin P} \psi_c(a_j) x_j - \sum_{j \in P_Z} \psi_c(-a_j) x_j - \sum_{j \in P_W} \psi_c(-a_j) x_j \geq 1 - \sum_{j \in P} \psi_c(-a_j) 
    \]
    For $j \in P_Z$, we have $\psi_c(-a_j) = -a_j$.
    Therefore, we can rewrite $\reasoncMIR$ as
    \[
        x_r + \sum_{j \notin P} \psi_c(a_j) x_j + \sum_{j \in P_Z} a_j x_j - \sum_{j \in P_W} \psi_c(-a_j) x_j \geq 1 - \sum_{j \in P_W} \psi_c(-a_j) + \sum_{j \in P_Z} a_j.
    \]
    This is equivalent to
    \[
        x_r + \sum_{j \notin P} \psi_c(a_j) x_j + \sum_{j \in P_Z} a_j x_j + \sum_{j \in P_W} \psi_c(-a_j) \bar x_j \geq 1 + \sum_{j \in P_Z} a_j.
    \]
    Given that for $j \in P_W, \psi_c(-a_j) \leq 0$, the constraint
    \begin{equation} \label{eq:weakenCMIR}
        x_r + \sum_{j \notin P} \psi_c(a_j) x_j + \sum_{j \in P_Z} a_j x_j \geq 1 + \sum_{j \in P_Z} a_j,
    \end{equation}
    is dominated by $\reasoncMIR$.
    Hence, it suffices to show that $\reasonwMIR$ is equivalent to \eqref{eq:weakenCMIR}.
    To this end, notice that $\reasonwMIR$ is given by
    \[
        x_r + \sum_{j \in P_Z} a_j x_j + \sum_{j \notin P} \psi_w(a_j) x_j \geq \left\lceil b - \sum_{j \in P_W} a_j\right\rceil 
    \]
    with
    \[ 
        \psi_w(a) = \lfloor a \rfloor + \min\left\{1, \frac{f(a)}{f(b - \sum_{j \in P_W} a_j)}\right\}.
    \]
    However, the fractional parts of $b - \sum_{j \in P_W} a_j$ and $b - \sum_{j \in P} a_j$
    are equal because their difference $\sum_{j \in P_Z} a_j\in\Z$.
    That is, $f(b - \sum_{j \in P_W} a_j) = f(b - \sum_{j \in P} a_j)$.
    This implies that $\psi_w = \psi_c$.
    Finally, 
    \[
        \left\lceil b - \sum_{j \in P_W} a_j\right\rceil = \left\lceil b - \sum_{j \in P} a_j + \sum_{j \in P_Z} a_j \right\rceil =\left\lceil b - \sum_{j \in P} a_j \right\rceil + \sum_{j \in P_Z} a_j  = 1 + \sum_{j \in P_Z} a_j.
    \]
    Thus, $\reasonwMIR$ is equivalent to \eqref{eq:weakenCMIR} and, therefore, dominated by $\reasoncMIR$.
\end{proof}

For completeness, we recall that the wMIR-based reduction dominates the Chvátal-Gomory-based reduction as described in \cite[]{MBGN23ImprovingConflictAnalysisMIP}.
Hence, \Cref{prop:complementation-domination2} shows that the cMIR-based reduction also dominates the Chvátal-Gomory-based reduction.
From \citet[]{GNY19DivisionSaturation} it follows
that the Chv\'atal-Gomory and coefficient tightening reduction algorithms are incomparable. 
Similar arguments can be used to show the same result for the MIR reduction from \Cref{prop:mirReduction} and the coefficient tightening reduction from \Cref{algo:ReduceSAT}. 
Details can be found in \cite[]{MBGN23ImprovingConflictAnalysisMIP, GNY19DivisionSaturation}.

\section{Conflict Analysis for Mixed Binary Programs}
\label{sec:cut-based-conflict-analysis-for-mbp}

Our next goal is to extend cut-based conflict analysis to the class of mixed binary programs.
In \Cref{sec:reduction-techniques-for-bp} we have seen that for pure binary programs it is always possible to reduce the reason constraint to guarantee that in each iteration of conflict analysis the linear combination of the current conflict constraint and the reduced reason constraint remains infeasible under the local bounds, and hence the conflict analysis invariant is preserved. 
The key property of the reduced reason constraint is that it can be made to propagate the resolved variable tightly even when considered over the reals.
In the case of MBPs, continuous variables are always propagated tightly. 
Hence, from \Cref{prop:tight-prop-resolution} it follows that whenever resolving a continuous variable, the linear combination of the current conflict constraint and the reason constraint that propagated the variable we are resolving on remains infeasible under the local bounds.

However, as shown in detail in the following example, the mere presence of non-relaxable continuous variables can lead to a situation where resolving a binary variable $\var{\ridx}$ is impossible without using additional problem information. 

\begin{example}
\label{ex:mbp-reason-reduction}
Consider the following system of constraints:
\begin{align*}
    C_1: \, -2\var{1} - 4y_1 - 2y_2 \geq -3, \quad 
    C_2:\, 20\var{1} + 5y_1  - y_2\geq 4, \quad
    C_3:\, - 20\var{1} + 5y_1 - 10y_2\geq -16, \quad
\end{align*}
\vspace{-1cm}
\begin{align*}
    C_4:\, -y_2 -\var{2} \geq 0, \quad
    C_5:\, y_2 - \var{3} \geq 0
\end{align*}
where $\var{1}$, $\var{2}$ and $\var{3}$ are binary variables, $y_1$ is a continuous variable with global bounds $[0, 1]$, and $y_2$ is a continuous variable with global bounds $[-1, 1]$. 
After branching on $\var{2}$, in the subproblem $\var{2}=0$ we can deduce $y_2\leq 0$ from $C_4$. 
Next, from $C_5$ we can deduce $\var{3}=0$ and $y_2\geq 0$. The constraint $C_1$ propagates $y_1 \leq 3/4$ which leads to $C_2$ propagating $\var{1} \geq 1$.
        Under the local bounds, the constraint $C_3$ is infeasible.
Eliminating $\var{1}$ with $C_2$ being the reason constraint and $C_3$ the conflict, we obtain $C_2+C_3: 10y_1 - 11y_2 \geq -12$ which is not infeasible under the local bounds.
\bluediff{Next we show that it is not possible to find any globally valid inequality 
that is violated in the local bounds by considering only the constraints $C_2$ and $C_3$.
Consider the two points $p_1 = (1, 0, 0, 0, -0.4)$ and $p_2 = (0, 0, 0, 0.88,
0.4)$. Both points are integer feasible for the constraints $C_2$ and $C_3$. 
Since any globally valid inequality for the system $C_2$ and $C_3$ must satisfy the 
two points, it must also satisfy any convex combination 
$ p^\lambda = \lambda p_1 + (1-\lambda)p_2$ for $\lambda \in [0,1]$. However, the point $p^{0.5} = (0.5, 0, 0, 0.44, 0)$ 
lies in the local domain of the variables and hence cannot be separated by any globally valid inequality.
\Cref{fig:mbp-reason-reduction} shows the non-empty intersection of the mixed integer hull with the local domain.
}
\begin{figure}
    \centering
    \includegraphics[width=1.0\textwidth]{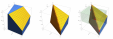}
    \caption{\bluediff{Left: Region defined by $C_2$, $C_3$, and the global domain; Middle: Mixed‐integer hull; Right: Mixed‐integer hull intersected with the local domain.}}
    \label{fig:mbp-reason-reduction}
\end{figure}
\end{example}

In the following section we propose a new reason reduction algorithm for MBP that does not only consider the reason and conflict constraint, but also constraints that propagated non-relaxable continuous variables.
\subsection{Reduction Algorithm for Mixed Binary Programs}
The goal of the reduction algorithm is to remove non-relaxable continuous variables from the reason constraint while preserving the propagation of the binary variable $\var{\ridx}$.
The next proposition shows that after resolving a non-relaxable continuous variable from the reason constraint, the new reason constraint still propagates the binary variable $\var{\ridx}$.

\begin{proposition}
    \label{prop:remove-continuous-var-from-reason2}
    Let $\contreasoncon: a_c \var{c} + a_\ridx \var{\ridx}+ \sum_{ \vidx\not\in \{ r,c \} } a_\vidx
    \var{\vidx} \geq \rhs$ be a MBP constraint propagating a continuous variable $\var{c}\in [\lb{c}, \ub{c}]$ in the state $\rho_1$.    Further, assume that the bound change on the continuous variable is non-relaxable for a constraint $\reasoncon: a'_c\var{c} + a'_\ridx \var{\ridx} + \sum_{ \vidx\not\in \{ r,c \} } a'_{\vidx} \var{\vidx}\geq b'$ that propagates a variable $\var{\ridx}$ in a later state $\rho_2$.
    Then, the linear combination of $\reasoncon$ and $\contreasoncon$ that eliminates $\var{c}$ propagates $x_r$ at least as strong in the sense that it gives the same or a better bound as $\reasoncon$ in $\rho_2$.
\end{proposition}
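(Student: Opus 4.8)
The plan is to exhibit the eliminating combination explicitly and then compare the bound it propagates on $\var{\ridx}$ with the one propagated by $\reasoncon$, the key leverage being that a continuous variable is always propagated tightly. First I would fix canonical signs. Because the bound change on $\var{c}$ set by $\contreasoncon$ is non-relaxable for $\reasoncon$, the coefficients $a_c$ and $a'_c$ must have opposite signs: if $\contreasoncon$ tightened the \emph{lower} bound of $\var{c}$ (so $a_c>0$), this bound can only affect the maximal activity of $\reasoncon$ when that activity uses the lower bound of $\var{c}$, i.e. when $a'_c<0$; the upper-bound case is symmetric. Hence $t:=-a'_c/a_c>0$, and the nonnegative combination $C_{\mathrm{new}}:=\reasoncon+t\,\contreasoncon$ is globally valid and has zero coefficient on $\var{c}$. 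I will carry out the argument for $a_c>0$, $a'_c<0$ and a lower-bound propagation $a'_\ridx>0$; the remaining sign patterns follow by complementation.

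Next I would exploit continuity. Since $\var{c}$ is continuous, $\contreasoncon$ propagates it tightly, so the propagated value $\gamma$ (which is exactly the local lower bound $\ell_c^{\rho_2}$ that $\reasoncon$ uses, as $a'_c<0$) satisfies the \emph{exact} identity $a_c\gamma=b-\max_{\rho_1}\sum_{j\neq c}a_jx_j$ with no rounding gap. Writing $\beta_R$ for the bound $\reasoncon$ propagates on $\var{\ridx}$ at $\rho_2$ and splitting the $\var{c}$-term out of its activity gives $a'_\ridx\beta_R=b'-a'_c\gamma-T_2$, where $T_2:=\max_{\rho_2}\sum_{j\neq \ridx,c}a'_jx_j$. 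Substituting $a'_c=-ta_c$ and the tightness identity, and separating $\var{\ridx}$ from the $\rho_1$-activity via $\max_{\rho_1}\sum_{j\neq c}a_jx_j=R_1+S_1$ with $R_1:=\max_{\rho_1}a_\ridx x_\ridx$ and $S_1:=\max_{\rho_1}\sum_{j\neq \ridx,c}a_jx_j$, I obtain $a'_\ridx\beta_R=b'+tb-tR_1-tS_1-T_2$.

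Now I would compute the bound $\beta_N$ that $C_{\mathrm{new}}$ propagates on $\var{\ridx}$, whose coefficient is $A_\ridx:=a'_\ridx+ta_\ridx$. Assuming $A_\ridx>0$ one has $A_\ridx\beta_N=b'+tb-\max_{\rho_2}\sum_{j\neq \ridx,c}(a'_j+ta_j)x_j$, and subadditivity of the maximum bounds the activity by $T_2+tS_2$ with $S_2:=\max_{\rho_2}\sum_{j\neq \ridx,c}a_jx_j$. A short cross-multiplication, using $a'_\ridx\beta_R=b'+tb-tR_1-tS_1-T_2$, reduces the target $\beta_N\geq\beta_R$ to the single scalar inequality $R_1+S_1-S_2\geq a_\ridx\beta_R$. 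This I would discharge from the structural facts: monotonicity of maximal activity under state refinement gives $S_2\leq S_1$ since $\rho_1\preceq\rho_2$ and the box only tightens; and since $\var{\ridx}$ is binary with $\beta_R\in[\ell_\ridx^{\rho_1},u_\ridx^{\rho_1}]\subseteq[0,1]$ (the propagated bound lies in the domain and the upper bound only decreases from $\rho_1$ to $\rho_2$), we get $R_1\geq a_\ridx\beta_R$ on its own — for $a_\ridx\geq0$ because $\beta_R\leq u_\ridx^{\rho_1}$, and for $a_\ridx<0$ because $a_\ridx\beta_R\leq0\leq R_1$ — so adding $S_1-S_2\geq0$ closes the estimate.

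The main obstacle I expect is bookkeeping rather than a deep idea: the elimination changes the coefficient on $\var{\ridx}$ from $a'_\ridx$ to $A_\ridx=a'_\ridx+ta_\ridx$, so the two propagated bounds live on constraints with different $\var{\ridx}$-coefficients and cannot be compared termwise — the reduction to $R_1+S_1-S_2\geq a_\ridx\beta_R$ is exactly what makes the comparison clean, but it forces me to verify $A_\ridx>0$ so that $C_{\mathrm{new}}$ still yields a lower bound in the relevant case, and to reconcile the two states, since the tightness identity is anchored at $\rho_1$ while $\reasoncon$ propagates at $\rho_2$. This is precisely where monotonicity of the activity across the refinement $\rho_1\preceq\rho_2$ becomes indispensable, and where I would spend the most care.
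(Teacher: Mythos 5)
Your overall strategy mirrors the paper's proof — use non-relaxability to fix opposite signs on $a_c,a'_c$, use the fact that the continuous variable is propagated \emph{tightly} to substitute $\contreasoncon$'s propagation into $\reasoncon$'s bound, and close with monotonicity of activities under the domain shrinkage from $\rho_1$ to $\rho_2$ — and your algebra for the case you treat is correct. But there is a genuine gap exactly where you flag one: your comparison of propagated bounds requires dividing by the aggregated coefficient $A_r = a'_r + t\,a_r$, so your argument only covers $A_r>0$, and this condition is not something you can "verify" — it can simply be false. Nothing in the hypotheses links $a_r$ to $a'_r$; indeed, \Cref{rem:continuous-reduction} in the paper is devoted to precisely the situation where the sign of the $x_r$-coefficient flips under aggregation (e.g.\ $a'_r>0$ while $a_r < -a'_r < 0$), and shows that then the aggregated constraint is in fact infeasible at the state preceding $\rho_2$. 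So completing your proof requires a second, structurally different argument for $A_r \le 0$ (establishing infeasibility of the aggregation, which subsumes any propagated bound); it cannot be patched by assuming the sign.

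The paper's proof avoids the case split entirely by never performing your cross-multiplication: it keeps $a'_r x_r$ — the reason's own coefficient — as the left-hand side for \emph{both} constraints, pushing the term $a_r x_r$ contributed by $\contreasoncon$ into the minimized activity. Concretely, it compares $a'_r x_r \ge \min_{\rho_1}\{b - a_r x_r - \sum_{j} a_j x_j\} + \min_{\rho_2}\{b' - \sum_j a'_j x_j\}$ (the reason's bound after substituting the tight bound on $x_c$) against $a'_r x_r \ge \min_{\rho_2}\{b - a_r x_r - \sum_j a_j x_j\} + \min_{\rho_2}\{b' - \sum_j a'_j x_j\}$ (implied by the aggregation), so the conclusion follows from $\min_{\rho_2}\{\cdot\} \ge \min_{\rho_1}\{\cdot\}$ alone, with no sign condition and no division by $A_r$. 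This formulation also removes your secondary wrinkle: your inequality $R_1 \ge a_r\beta_R$ needs the propagated bound $\beta_R$ to lie inside $x_r$'s local domain (reasonable, but an extra sanity assumption about the propagation), whereas in the paper's formulation the corresponding comparison is $\max_{\rho_2} a_r x_r \le \max_{\rho_1} a_r x_r = R_1$, which is pure monotonicity.
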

\begin{proof}

Since the bound change implied by $\contreasoncon$ on $x_c$ is non-relaxable for $\reasoncon$, the coefficients $a_c$ and $a'_c$ must have opposite sign, and
we can assume without loss of generality that $a_c = 1$ and $a'_c = -1$.
The bound change of $x_r$ implied by $\reasoncon$ follows from
\begin{align*}
    a'_\ridx \var{\ridx}
    \geq \min_{\rho_2}\{b' + x_c - \sum_{ \vidx \not\in \{ r,c \} } a'_{\vidx} \var{\vidx}\}
    =\ell_c^{\rho_2} + \min_{\rho_2}\{b' - \sum_{ \vidx \not\in \{ r,c \} } a'_{\vidx} \var{\vidx} \}.
\end{align*}
By assumption, $\ell_c^{\rho_2}=\ell_c^{\rho_1}$ is the result of propagating $\contreasoncon$ in state $\rho_1$ and is given by
\[
    x_c  \geq \min_{\rho_1} \{b - a_r x_r - \sum_{ j \neq \{ r,c \} } a_j  x_j \} = \ell_c^{\rho_2}.
\]
Therefore, the bound change on $x_r$ from propagating $\reasoncon$ in state~$\rho_2$ is the one implied by
\begin{equation}\label{eq:xr_by_creason}
    a'_r x_r
    \geq \min_{\rho_1} \{b - a_r x_r - \sum_{ j \neq \{ r,c \} } a_j  x_j\} + \min_{\rho_2}\{b' - \sum_{ \vidx \neq \{ r,c \} } a'_{\vidx} \var{\vidx}\}.
\end{equation}
On the other hand, the linear combination of $\reasoncon$ and $\contreasoncon$ eliminating $\var{c}$ is given by
\begin{equation}\label{eq:lincomb}
    (a_r + a'_r) x_r + \sum_{ j \neq \{ r,c \} } (a_j  + a'_j) x_j \geq b + b'.
\end{equation}
Hence, the bound change on $x_r$ from propagating this last constraint in state~$\rho_2$ is the one implied by
\begin{align}
    a'_r x_r 
    &\geq b + b' - a_r x_r - \sum_{ j \neq \{ r,c \} } (a_j  + a'_j) x_j \nonumber\\
    &= b - a_r x_r - \sum_{ j \neq \{ r,c \} } a_j x_j + b' - \sum_{ j \neq \{ r,c \} } a'_j x_j \nonumber\\
    &\geq \min_{\rho_2} \{b - a_r x_r - \sum_{ j \neq \{ r,c \} } a_j x_j \}
    + \min_{\rho_2} \{b' - \sum_{ j \neq \{ r,c \} } a'_j x_j \}. \label{eq:xr_by_lincomb} 
\end{align}
Comparing the right-hand sides of \eqref{eq:xr_by_creason} and \eqref{eq:xr_by_lincomb}, we see that the linear combination~\eqref{eq:lincomb} implies a bound at least as strong as $\reasoncon$.

\end{proof} 

\begin{remark}
    \label{rem:continuous-reduction}
    Notice that in the proof above, the bound implied by the linear combination of $\reasoncon$ and $\contreasoncon$ can be substantially stronger
    than the one implied by the reason constraint.
    Indeed, the aggregated constraint can even show infeasibility at state~$\rho_2$.
    A particularly interesting example of this situation is when the sign of the coefficient of $x_r$ in the aggregated contraint ($a_r+a'_r$)
    is different from the one in the reason constraint ($a_r'$).
    To see this, assume that $a'_r > 0$ (and that $x_r$ is binary for simplicity). Since the sign changes in the aggregated constraint, it holds that $a_r < -a'_r < 0$.
    This means that $\ell_c^{\rho_2}$ reduces to $\min_{\rho_1} \{b - \sum_{ j \neq \{ r,c \} } a_j  x_j \}$.
    Furthermore, since the reason constraint propagates $x_r$, we have that the right-hand side of \eqref{eq:xr_by_creason} is positive, i.e.,
    \[ 
      \kappa := \min_{\rho_1} \{b - \sum_{ j \neq \{ r,c \} } a_j  x_j\} + \min_{\rho_2}\{b' - \sum_{ \vidx \neq \{ r,c \} } a'_{\vidx} \var{\vidx}\} > 0.
    \]
    This gives rise to the contradiction
    \[
        0 \geq (a_r + a'_r) x_r \geq  b + b' - \sum_{ j \neq \{ r,c \} } (a_j  + a'_j) x_j \geq \kappa > 0.
    \]
    This contradiction proves infeasibility independently of the bound change on $x_r$ in state $\rho_2$.
    Hence, the aggregated constraint \eqref{eq:lincomb} is already infeasible at the state preceding $\rho_2$.  
\end{remark}

\Cref{algo:ReduceMixed} shows the reduction algorithm for MBP. 
It takes as input the reason constraint $\reasoncon$ propagating in state~$\state{\hat \nidx,\hat \bidx}$, the conflict constraint $\conflictcon$ infeasible in state $\state{\bar \nidx,\bar \bidx}$, and the binary variable to resolve~$\varr$.
Then it iteratively resolves all non-relaxable continuous variables from the reason constraint until no such variable exists.
The set of non-relaxable continuous variables 
\bluediff{at a state $\state{\nidx,\bidx}$ is denoted by $\slackreducing(\reasoncon, \state{\nidx,\bidx})$.}
The algorithm may terminate early if the reason constraint becomes infeasible under the local bounds (see Remark 3 in Appendix D).
Finally, after the main loop, we apply the reduction algorithm as in the binary case.
The algorithm always terminates since the number of states is finite and it explores the states in a monotonically decreasing order with respect to the lexicographical order of the indices $(\nidx,\bidx)$ and the non-relaxable continuous variables appearing in the reason constraint.

\newcommand{\reduceCoefTight}{\textnormal{reduceCoefTight}}
\begin{algorithm}[h]
    \DontPrintSemicolon
	\SetKwInOut{Input}{Input}\SetKwInOut{Output}{Output}
	\SetKwInOut{Init}{Initialization}
    \SetKwFunction{pop}{pop()}
	\Input{reason constraint $\reasoncon$ propagating in state $\state{\hat \nidx,\hat \bidx}$,\\
      conflict constraint $\conflictcon$, binary variable to resolve $\varr$}
	\Output{reduced reason $\reasoncon$ or an earlier conflict $\conflictcon$} 
    
    $(\tilde{\nidx},\tilde{\bidx}) \leftarrow (\hat \nidx,\hat \bidx)$\;
    \While{\bluediff{$\slackreducing(\reasoncon, \state{\tilde{\nidx},\tilde{\bidx}}) \neq \varnothing$} }
    {
        $(\tilde{\nidx},\tilde{\bidx}) \leftarrow \max \{(\nidx,\bidx) \, | \, \bdcvar(\state{\nidx,\bidx}) \in $ $\slackreducing(\reasoncon, \state{\nidx,\bidx}) \cap K , \, (\nidx,\bidx) \lexlt (\tilde{\nidx},\tilde{\bidx})  \}$\;
        $ c\leftarrow \bdcvar(\state{\tilde{\nidx},\tilde{\bidx}})$\;
        $\contreasoncon \leftarrow \reasoncons(\state{\tilde{\nidx},\tilde{\bidx}})$\;
        $\reasoncon \leftarrow \resolve(\reasoncon,\contreasoncon,\var{c})$\;
        \If{$\reasoncon$ is infeasible in the predecessor state of $\state{\hat \nidx,\hat \bidx}$}
        {
            $\conflictcon \leftarrow \reasoncon$\;
            \Return{$\conflictcon$}\;
        }
    }
    $\reasoncon \leftarrow \textnormal{reduce}(\reasoncon, \conflictcon, \varr, \state{\hat \nidx,\hat \bidx})$\;
    \Return{$\reasoncon$}\;

	\caption{Reduction Algorithm for Mixed Binary Programs \label{algo:ReduceMixed}}
\end{algorithm}

\begin{example}
    In \Cref{ex:mbp-reason-reduction} we show that no globally valid inequality can be constructed that is violated in the local bounds by using only the reason constraint~$C_2$, the conflict constraint~$C_3$ and the global bounds of the variables. Next we show that, after resolving all non-relaxable continuous variables from the reason constraint~$C_2$, the new reason constraint contains only binary variables and propagates $\var{1}$:
    \begin{enumerate}
    \item Resolving $y_1$ by adding $1.25\cdot C_1$ to $C_2$ gives the
        new $\reasoncon:  17.5\var{1} -3.5y_2 \geq 0.25$.
    \item Resolving $y_2$ by adding $3.5\cdot C_5$ to $\reasoncon$ gives the
        new $\reasoncon: 17.5\var{1} - 3.5\var{3} \geq 0.25$.
    \end{enumerate}  
        This constraint contains only binary variables, propagates $\var{1}$ to 1 non-tightly, and hence can be reduced as shown in \Cref{sec:reduction-techniques-for-bp}.
\end{example}

\section{Conflict Analysis for Mixed Integer Programs}
\label{sec:general-integer-variables}
Next, we briefly discuss the limitations of the conflict analysis algorithm
under the presence of general integer variables. Again, the goal is to resolve
a bound change on a variable by some suitable reduction and aggregation of
constraints. 
As expressed in \Cref{prop:tight-prop-resolution}, whenever the propagation of
the resolved variable is tight, we can simply aggregate the reason
for the bound change to the current conflict and the
resolvent remains infeasible.
However, in the presence of general integer variables, our reduction algorithms are not
guaranteed to find a reduced reason that propagates tightly.

Non-tight propagation occurs because of the existence of non-integer vertices for the linear relaxation of $\reasoncon$
over the local domain $L$.
The goal then is to find a cut that separates the non-integer vertex from the feasible region $\reasoncon \cap G$, where $G$ is the box defined by the global bounds. 
This is not possible if the general integer variable contributes with a non-global bound to the propagation of the reason constraint.
\Cref{fig:general-integer-cut-not-possible} gives an example that illustrates such a situation.

\begin{figure}[h]
    \centering
	\begin{tikzpicture}[scale=1.4,   every node/.style={draw, minimum size=0.15cm} 
]
		\draw[] (1,0) -- (5,1);

		\fill[blue!15] (1,0) -- (5,1) -- (5,0) -- cycle;
		\draw[dashed] (2,-0.4) -- (2,1.4);
		\draw[dashed] (3,-0.4) -- (3,1.4);
		\fill[blue!50] (2,0) -- (3,0) -- (3,0.5) -- (2,0.25) -- cycle;

		\foreach \x in {1, 2, 3, 4, 5} {
			\node[fill, circle, inner sep=1pt] at (\x, 0) {};
		}
		\foreach \x in {1, 2, 3, 4, 5} {
			\node[fill, circle, inner sep=1pt] at (\x, 1) {};
		}

		\node[fill=mygreen, circle, inner sep=1pt] at (1, 0) {};
		\node[fill=mygreen, circle, inner sep=1pt] at (2, 0) {};
		\node[fill=mygreen, circle, inner sep=1pt] at (3, 0) {};
		\node[fill=mygreen, circle, inner sep=1pt] at (4, 0) {};
		\node[fill=mygreen, circle, inner sep=1pt] at (5, 0) {};
		\node[fill=mygreen, circle, inner sep=1pt] at (5, 1) {};
		\node[fill=red, circle, inner sep=1pt] at (3,0.5) {};

		\end{tikzpicture}
	\caption{Example where it is impossible to find a linear cut that separates the non-integer vertex (2,0.5) from the feasible region.}
    \label{fig:general-integer-cut-not-possible}
\end{figure}
One approach to addressing the resolution of general integer variables is proposed in \cite[]{JdM13Cutting}.
Here the authors allow only fixings of variables to their current lower or upper bound.
While this strategy restricts the branching heuristic, it also avoids scenarios like the one illustrated in \Cref{fig:general-integer-cut-not-possible}, allowing reduction algorithms to effectively eliminate non-integer vertices.

Another work that deals with general MIP is \cite[]{Achterberg07ConflictAnalysis}, which extends the graph-based conflict analysis typically used for SAT problems to the general MIP domain.
In the MIP context, conflict graph nodes represent bound changes rather than variable fixings.
Unlike in SAT, where conflicts derived from the graph are disjunctions of literals and can be represented as linear constraints, conflicts involving non-binary variables are disjunctions of general bound changes, which cannot be captured by a single linear constraint.
Consequently, MIP solvers utilize these conflicts solely for propagation and not as cuts.

In our implementation, we consistently attempt to resolve variable bound changes by aggregating the reason and conflict constraints. 
Preliminary experiments show that in cases where integer variables do not contribute with a global bound to the propagation of the reason constraint, resolution still succeeds in 72\% of cases, meaning the resulting resolvent remains infeasible even without reducing the reason constraint.
In the remaining 28\% of cases, we attempt to separate the non-integer vertex from the feasible region heuristically, 
by applying the general cMIR procedure \cite[]{marchand2001aggregation}. 
However, this approach is only successful about 3\% of the time.

\section{Experimental Evaluation}
\label{sec:experiments}

\newcommand{\clausalCA}{\textsc{Graph-CA}\,}
\newcommand{\coefTighteningCA}{\textsc{CoefT-CA}\,}
\newcommand{\MIRCA}{\textsc{cMIR-CA}\,}

The focus of this section is to evaluate the performance of the different conflict analysis algorithms in a MIP solver. Our experiments are designed to answer the following questions:
\begin{itemize}
    \item[-] How do different conflict analysis algorithms perform 
when being applied to the same set of infeasibilities, not only in terms of performance but also regarding the characteristics of the generated constraints, e.g., constraint type, size and potential for propagation?
    \item[-] What is the performance impact of cut-based conflict analysis in the MIP solver SCIP, and how does it integrate with other conflict analysis algorithms?
\end{itemize}

All techniques discussed in this paper have been implemented in the open-source
MIP solver SCIP 9.0 \cite[]{SCIP9}. 
\bluediff{We have uploaded the code and results to a public GitHub repository hosted 
by the INFORMS Journal on Computing \cite[]{Mexi2024}.} The experiments were conducted on a cluster
equipped with Intel Xeon Gold 5122 CPUs @ 3.60GHz and 96GB of RAM. Each run was
performed on a single thread with a time limit of two hours. 
To mitigate the effects of performance variability \citep{lodi2013performance} and to ensure a fair comparison of the different conflict analysis algorithms, we used a large and diverse set of test instances, namely the MIPLIB 2017 benchmark set \citep{MIPLIB2017},  with five permutations of each individual model.
After excluding all models that are not solvable by any setting for any of the five seeds, we obtained a total of 795 measurements per run. For the remainder of this paper, we will refer to the combination of a model and a permutation as an instance.

\subsection{Implementation details}
In our experiments, we do not use the Chvátal-Gomory-based reduction since it
is dominated by the \MIRCA reduction (and does not generalize to problems with continuous variables). 
Moreover, as demonstrated in \Cref{prop:complementation-domination2}, 
 opting for complementation over weakening results in more robust reasoning constraints. 
Therefore, we consider the \MIRCA reduction solely with complementation.

For the coefficient tightening-based reduction, we employ a single-sweep approach for variable weakening,
improving the reduction algorithm's speed by avoiding
repeated activity computations and applying only one cut per iteration. As observed in
\cite[]{MBGN23ImprovingConflictAnalysisMIP}, this does usually not lead to a
loss of information since all, or almost all, variables have to
be weakened before applying coefficient tightening to obtain a tightly
propagating reason constraint.

Finally, we decided to mitigate the weak points of cut-based conflict analysis: dealing with general integer variables or propagations that are not
explained by a single linear inequality.
If cut-based conflict analysis fails to generate a conflict in such cases, we fall back to graph-based conflict analysis.
In a preliminary experiment on \miplib 2017 we measured this fallback mechanism to occur in approximately 19\% of the conflict analysis calls.

\subsection{Comparison of Conflict Analysis Algorithms in Isolation}

In our first experiment, we compare the following three conflict analysis algorithms:
\begin{itemize}
    \item[-] \clausalCA: Graph-based conflict analysis algorithm \cite[]{Achterberg07ConflictAnalysis}.
    \item[-] \coefTighteningCA: Cut-based conflict analysis using the coefficient tightening-based reduction.
    \item[-] \MIRCA: Cut-based conflict analysis using the cMIR-based reduction. 
\end{itemize}

When aiming to compare the effect of different conflict analysis algorithms throughout a MIP solve, there is a major caveat: Usually, the first few conflict analysis calls already cause the solution path of the solve to diverge significantly (demonstrating that the analysis had an impact). Thus, the vast majority of conflict analysis calls will be on a completely different set of infeasibilities, and it is hard to impossible to say whether observations made about conflicts differing in their characteristics are structural or mostly a side effect of the path divergence.

Therefore, to answer the first question from the beginning of this section, we carefully designed an experiment that allows us
to compare different conflict analysis algorithms by different conflicts from the exact same infeasibilities throughout a complete tree search. To achieve this, we split the generation and the exploitation of the conflicts into two separate runs.

In the first run, whenever an
infeasibility is found, we generate a single conflict from the 1-FUIP; however, we do not use the conflicts for propagation, do not
apply bound changes, consider them for the branching decision, or allow any other interaction with the solving process. They are collected and ``ignored''. We call this the \textit{conflict
generation run}. 
Thus, we will traverse an identical search tree, independent of the conflict analysis algorithm used, and each algorithm will be applied to the same set of infeasibilities.

In the second run, we add the conflicts from the conflict generation run to the problem (right from the beginning of the search) and allow propagation and other interactions with the solution process.
 We refer to this run as the
\textit{conflict exploitation run}. 
This approach enables us to draw conclusions about the impact of conflicts generated from the same information. It even allows for a one-to-one comparison of conflicts from different algorithms.

In the conflict generation run, we set the same time limit as in the conflict exploitation run.
However, we try to avoid hitting this time limit, since the point at which a wall-clock time limit is hit, is non-deterministic and we may observe a different  number of conflict calls between two runs, if one of them hit the time limit slightly earlier/later.
Therefore, as additional deterministic working limits, we impose a limit of 50,000 nodes and a limit of 5,000 conflict
analysis calls.
This also avoids the conflict exploitation run using unnaturally many conflicts since SCIP typically limits the number of conflicts that are handled at the same time to a few thousand.
With these settings, only 74 of the instances hit the time limit and were consequently discarded from the results below. The rest of the instances are either solved, or hit the deterministic node or conflict analysis calls limit.
Moreover, we discard a few 
instances for which the settings reported numerically inconsistent results. 
This gives us a total of 704 instances for the comparison. While SCIP usually discards conflicts from \clausalCA that are larger than a certain threshold, we deactivated this restriction for this experiment so as not to skew the results.

\begin{table}[h]
    \centering
    \caption{Comparison of the different conflict analysis algorithms.}
    \begin{tabular}{lrrrrrrrr}
        \hline
         Setting            &  opt &   time(s) & lin.confs & confs & avg.length &   used(\%) &   bdchgs \\
        \hline
         \clausalCA     &   553 &  433.0   &   - &    96.7 &     27.5 &     31.2 &      4.9 \\
         \coefTighteningCA &   562 &  437.8  &   58.7 &  99.5 &     59.6 &     44.9 &     24.4 \\
        \MIRCA  &    562 &  436.8 &           59.0 &       99.4 &     59.7 &     45.4 &     24.2 \\
         \hline
        \end{tabular}
        \label{tab:confexperiment}
\end{table}

\Cref{tab:confexperiment} summarizes the results of the experiment for the three
different conflict analysis algorithms. The table shows the number of instances
(inst), the number of instances solved to optimality (opt), the average
solving time in seconds (time(s)), the average number of conflicts
generated (confs), the average number of non-zeros in conflicts (avg.length), the
percentage of conflicts that were used in propagation (used(\%)), and the
average number of bound changes applied (per node) by the conflicts (bdchgs). 

For the two cut-based variants, we additionally show the number of
linear conflicts generated(lin. confs); recall that for general integer
variables or certain propagators, we might fall back to graph-based conflict
analysis. The confs column, in these cases, considers both cut-based and
graph-based conflicts.

Our first observation is that both \coefTighteningCA and \MIRCA perform very similarly, which 
aligns nicely with the results for pure binary programming by \cite[]{MBGN23ImprovingConflictAnalysisMIP}. 
When compared to \clausalCA,  both solve eight more instances to optimality and the average time is comparable. 
In the following, we will concentrate on comparing \MIRCA to \clausalCA; all observations and conclusions would be the same or very similar for a \coefTighteningCA to \clausalCA comparison.

Two of the most noticeable differences in \Cref{tab:confexperiment} are the large discrepancy in the number of propagations and the percentage of conflicts actually used in propagation. Both are significantly larger for the cut-based variants, which is a favorable result.
When analyzing our results, we found three different reasons for this behavior.

Firstly,  \MIRCA conflicts are quite different from \clausalCA conflicts. On the one hand, \clausalCA conflicts are always logic clauses or bound disjunctions that only
propagate one single bound change when all other variables in the clause are fixed to a value not satisfying the clause. On the other hand, \MIRCA conflicts might propagate multiple bound changes even when most variables are not fixed.
For example, for the instance \texttt{neos-957323} with seed 0, \MIRCA generated the following set packing conflict constraint:
\[
 x_{1130} +x_{1131} +x_{1132} + x_{1133} +x_{1134}  +x_{1135} + x_{1150} +x_{1153} +x_{1156} \le 1,
\]
which propagates all remaining variables to zero whenever one of the variables in the constraint is fixed to one.

Secondly, both \MIRCA and \coefTighteningCA conflicts are, on average, twice as long as those from \clausalCA.  As an extreme example, consider the instance \texttt{nw04} with seed 0. This instance has 87\,482 variables, 36 constraints, and 636\,666 non-zeros, leading to an average of 17\,685 non-zeros per constraint. 
Conflict constraints generated from \MIRCA have an average length of 18\,160 non-zeros, thus very similar to the model constraints.
At the same time, the \clausalCA conflicts have an average length of ``only'' 243.
While clausal conflicts tend to be weaker the longer they are, conflicts from the cut-based approach can get stronger the longer they are, see the example above or the rich literature on lifting cutting planes.
Consequently, we observed a total of 864 bound changes from cut-based conflict constraints for instance \texttt{nw04} and only 26 propagations from \clausalCA conflicts.

Thirdly, there is another peculiar situation relating to general integer and continuous variables. Take as an example the instance \texttt{gen-ip002} with seed 1. For this instance all variables involved in conflict constraints are general integers with lower bound 0 and an upper bound of 140 or less. 
Conflicts from \clausalCA are clauses and are therefore restricted to one particular bound change per variable that they can propagate, in our example, those was typically tightening the upper bound to a particular single-digit value.
Conflicts from cut-based conflict analysis, however, can propagate arbitrary bound changes, in particular, the same conflict can tighten the upper (or lower) bound of the same variable several times during the same dive in the tree search or even at the same node (if other propagators tightened bounds of other variables in the conflict in the meantime).
The difference in structure, while having a similarly positive impact on performance, indicates that there might be potential for the two strategies to complement each other when applied in combination.
Hence, our next experiment addresses a proper integration of cut-based conflict analysis into default SCIP.

\subsection{MIP Performance}
 
In this experiment, we activate cut-based conflict analysis as an additional conflict analysis method within SCIP. 
This means that conflicts from both cut- and graph-based conflict analysis, are generated at each infeasibility. 
\bluediff{For the cut-based conflict analysis, we only use the cMIR-based reduction. As seen in the previous section \MIRCA and \coefTighteningCA
perform similarly, however in an extended experiment with a longer time limit of 6 hours and 10 different random seeds, the cMIR-based reduction was
on average 8\% faster on ``affected'' instances and required 8\% fewer nodes on those solved to optimality.}

SCIP discards dense conflicts and regularly removes conflicts that
haven't propagated in a while based on an aging strategy. 
We use the same length and age limits also for conflicts from cut-based conflict analysis.
Preliminary experiments showed a slowdown of 4-5\% if we do not limit the
length of conflict constraints. 
On the other hand, the number of explored nodes decreases by ~2\% on average. 

\begin{table}
    \caption{Performance comparison of \scip vs \scip+\MIRCA.}
    \label{tbl:MIP5seeds}
    \small
    \begin{tabular*}{\textwidth}{@{}l@{\;\;\extracolsep{\fill}}rrrrrrrrr@{}}
    \toprule
    &           & \multicolumn{3}{c}{\scip} & \multicolumn{3}{c}{\scip + \MIRCA} & \multicolumn{2}{c}{relative} \\
    \cmidrule{3-5} \cmidrule{6-8} \cmidrule{9-10}
    Subset                & instances &                                   solved &       time &        nodes &                                   solved &       time &        nodes &       time &        nodes \\
    \midrule
    all & \bluediff{794} & \bluediff{636} & \bluediff{445.9} & \bluediff{-} & \bluediff{644} & \bluediff{424.2} 
    & \bluediff{-} & \bluediff{0.95} & \bluediff{-} \\
    \affected             &492 & 463 & 454.1 & - & 471 & 419.2 & - & 0.92 & -\\
    \cmidrule{1-10}
   \bracket{10}{tilim}   &       598 & 569 & 406.0 & - & 577 & 380.2 & - & 0.94 & -\\
    \bracket{100}{tilim}  &       443 & 414 & 987.7 & - & 422 & 890.7 & - & 0.90 &-\\

    \bracket{1000}{tilim} &        243 & 214 & 2594.4 & - & 222 & 2217.8 & - & 0.85 & -\\

    \difftimeouts         &         50 &  21 &  5934.9 & - & 29 &  4180.8 &  - &  0.70 &  - \\
    \midrule
    \alloptimal           &    615 & 615 & 201.2 & 2412 & 615 & 194.1 & 2283 & 0.96 & 0.95 \\
    aff.-\alloptimal            &442 & 442 & 339.4 & 5413 & 442 & 323.0 & 5022 & 0.95& 0.93 \\
            \bottomrule
    \end{tabular*}
                \label{tab:mipexperiment}
    \end{table}
   
    \Cref{tab:mipexperiment} shows the performance of SCIP with and without the 
    \MIRCA on MIPLIB2017.
    \bluediff{One instance was excluded due to numerical issues, leaving 794 instances in total.}
    The table is divided into
    different subsets of instances. The first row shows the performance on all
    \bluediff{794}~instances, and the second row on all 492~instances that are ``affected''
    by conflict analysis, i.e., where the execution path of the two settings differs.
    The next three rows show the performance on instances where at least one of
    the two settings needs at least 10, 100, or 1000 seconds to solve the
    instance, respectively. 
    \bluediff{
    The next row ``\difftimeouts'' shows the performance
    on instances where only one of the two settings is able to solve the instance to optimality.
    \scip solved 21 instances that \scip+ \MIRCA did not, while  \scip+ \MIRCA solved 29 instances that \scip did not.
    In a post-hoc analysis with a time limit of 6 hours, there is only a single instance that \scip
    solved in under 2 hours that  \scip+ \MIRCA still could not solve after 6 hours.
    Notably, there are 9 instances that \scip failed to solve within 6 hours, 
    while \scip+ \MIRCA had already solved them in under 2 hours.
    }
    The last two rows show the performance on instances that are
    solved to optimality by both settings and the subset of those instances
    that are affected by conflict analysis ``aff.-\alloptimal''. We included these 
    rows because they allow for a comparison of tree sizes.
    
    The results show that using \MIRCA leads
    to a notable improvement in the performance of SCIP. Namely, it increases
    the number of solved instances from \bluediff{636} to 644 and decreases the overall
    running time by 5\%. On affected instances, we observed a runtime decrease
    of 8\%. As shown in the rows \bracket{10}{tilim},
    \bracket{100}{tilim}, and \bracket{1000}{tilim}, the performance improvement
    is more pronounced on harder instances, with a speedup of up to 15\% on
    instances that need at least 1000 seconds to solve.
    
    The performance improvement is also reflected in the number of nodes. The
    number of nodes can only be reliably compared on instances that are solved
    to optimality by both settings. In this case, the number of nodes is reduced
    by 5\% overall and by 7\% on affected instances.

\section{Conclusion}
\label{sec:conclusion}

In this work, we extended the cut-based conflict analysis algorithm, which
originates from pseudo-Boolean solving, to MIP. 
Framed in MIP terminology, conflict analysis can be understood as a
sequence of linear combinations, integer roundings, and cut generation.
This MIP perspective allowed us to interpret the reason reduction subroutine as
the separation of a non-integer vertex from the feasible region defined by the
reason constraint and global bounds.
Hence, our presentation helped to bridge the gap between pseudo-Boolean and MIP views on conflict
analysis, connecting the study of cuts for the knapsack polytope with the reason
reduction for pure binary problems.
We then proposed a new reduction algorithm for pure binary programs using mixed
integer rounding cuts and proved that it is stronger than the division-based reduction
algorithm \cite[]{EN18RoundingSat} and our previous work
\cite[]{MBGN23ImprovingConflictAnalysisMIP}.

On our path to generalize cut-based conflict analysis to mixed integer problems
in full generality we encountered both positive and negative results.
First, we showed that the cut-based approach cannot be
directly applied to mixed binary programs without considering additional problem
information.
To address this limitation, we designed a new reduction algorithm
that not only utilizes two constraints (reason and conflict) in each iteration
of conflict analysis but also incorporates constraints whose propagation is
responsible for tightened bounds of continuous variables.
We demonstrate that in the presence of general integer variables conflict
analysis cannot be guaranteed to work as long as we refrain from lifting the
problem into higher dimension.
However, our empirical results show that these cases occur only rarely in
practice, rendering our cut-based conflict analysis a fruitful approach also for
general integer variables.

Our computational study, first helped us to understand that the learned constraints generated by the
cut-based conflict analysis differ structurally from those learned by clausal
reasoning and propagate variable bounds more frequently on average.
Finally, we observed that the cut-based conflict analysis is able to significantly enhance the out-of-the-box performance 
of the MIP solver SCIP, solving more instances, reducing the running time and
the size of the search tree over a large and diverse set of MIP instances from
\miplib2017 by at least 5\%.
This result constitutes a substantial performance improvement for a single technique in the
mature field of MIP solving.

\bibliographystyle{plainnat}

\bibliography{references}

\end{document}